\documentclass[11pt]{amsart}
\usepackage[vmargin=2.5cm,hmargin=2.5cm]{geometry}
\usepackage{enumerate}
\usepackage{color}
\usepackage{url}

\newtheorem{theorem}{Theorem}
\newtheorem{lemma}[theorem]{Lemma}
\newtheorem{corollary}[theorem]{Corollary}
\newtheorem{proposition}[theorem]{Proposition}

\newtheorem{defn}[theorem]{Definition}

\theoremstyle{remark}
\newtheorem{example}[theorem]{Example}
\newtheorem{remark}[theorem]{Remark}

\def\peb[#1]{{\left\lfloor #1\right\rfloor}}
\newcommand{\bs}{\boldsymbol}

\title{Delta sets for numerical semigroups with embedding dimension three} 

\author{P. A. Garc\'{\i}a-S\'{a}nchez}
\address{Departamento de \'Algebra and CITIC-UGR,
   Universidad de Granada, 18071 Granada, Espa\~na}
\email{pedro@ugr.es}

\thanks{The first author is supported by the projects FQM-343, FQM-5849, plan propio Universidad de Almer\'{\i}a and FEDER funds}

\author{D. Llena}
\address{Departamento de Matem\'aticas, Universidad de
   Almer\'{\i}a, 04120 Almer\'{\i}a, Espa\~na}
\email{dllena@ual.es}

\thanks{The second author is supported by the project FQM-343 and FEDER funds}

\author{A. Moscariello}
\address{Dipartimento di Matematica e Informatica, \ Universit\`a di Catania, \  Viale Andrea Doria 6, 
 and Scuola Superiore di Catania, \ Universit\`a di Catania, \  Via Valdisavoia 9, 
95125 Catania, Italy.}

\email{alessio.moscariello@studium.unict.it}

\keywords{numerical semigroup, factorizations, Delta sets, symmetric numerical semigroup, Euclid's algorithm}
\subjclass[2010]{20M13, 20M14, 05A17}

\begin{document}
\maketitle

\begin{abstract}
We present a fast algorithm to compute the Delta set of a nonsymmetric numerical semigroups with embedding dimension three. 
\end{abstract}

A monoid is a half-factorial monoid if for every element all the lengths of all the factorizations of this element in terms of atoms remain the same. Delta sets were introducee to measure how far a monoid can be from being half-factorial, and thus how wild the sets of lengths of factorizations are (\cite{G}). Geroldinger in \cite{G} presented the first results on Delta sets, also known as sets of distances, computing in particular the minimum distance between any two factorizations with consecutive lengths. It was shown in \cite{CGLMS} that for a monoid with bounded sets of lengths of factorizations, the maximum was reached in a particular class of elements, known as Betti elements (which are important for minimal presentation computations).

Recently Delta sets have been intensively studied on numerical semigroups (\cite{CHK,CKDH,CKLNZ}). It has been shown that Delta sets are eventually periodic (\cite{CHK}), and a bound for this periodicity was presented in that paper. As a byproduct, we get a procedure to compute the Delta set of a numerical semigroup (which is the union of all Delta sets of its elements). This bound was improved in \cite{BB}, then in \cite{GMV}, and lately in \cite{MOP}, where the fastest procedure to compute the Delta set of a numerical monoid is presented, based on dynamic programing. Christopher O'Neil implemented this procedure for the \texttt{GAP} (\cite{GAP}) package \texttt{numericalsgps} (\cite{numericalsgps}). In \cite{CKLNZ} it is shown that when the generators are too close to each other the Delta set of the numerical semigroup becomes the simplest possible: a singleton.

In the present manuscript we intend to understand better the behavior of Delta sets of element in a nonsymmetric numerical semigroup generated by three elements. As a consequence of this study we answer a question proposed by Scott Chapman during the International Meeting on Numerical Semigroups held in Vila Real on 2012. We are also able to compute the Delta set of these monoids with the same complexity as Euclid's greatest common divisor algorithm. We will show some examples of execution times comparing this new approach with the current implementation in \cite{numericalsgps} (which is meant for any numerical semigroup).

As it was pointed out in \cite{CGLMS}, minimal presentations are a fundamental tool to study Delta sets, and we take advantage that minimal presentations of nonsymmetric numerical semigroups with embedding dimension three are well known (\cite[Chapter 9]{numericalsgps}), and are ``unique''.

\section{Preliminaries}
Let $\mathbb N$ denote the set of nonnegative integers. Given $n_1,n_2,n_3\in\mathbb{N}$ with $\gcd(n_1,n_2,n_3)=1$, the \emph{numerical semigroup} generated by $\{n_1,n_2,n_3\}$ is the set $S=\langle n_1,n_2,n_3\rangle=\{x_1n_1+x_2n_2+x_3n_3 \mid (x_1,x_2,x_3)\in\mathbb{N}^3\}$, which is a submonoid of $(\mathbb N,+)$. We will assume that $n_1<n_2<n_3$, and that $\{n_1,n_2,n_3\}$ is a minimal generating system for $S$, that is, there is no $a,b\in \mathbb N$ such that $n_i=an_j+bn_k$ with $\{i,j,k\}=\{1,2,3\}$. In this setting it is said that $S$ is a semigroup with embedding dimension 3. 

The \emph{set of factorizations} of $s\in S$ is $\mathsf Z(s)=\{(x_1,x_2,x_3)\in \mathbb N^3 \mid x_1n_1+x_2n_2+x_3n_3=s\}$. We denote the \emph{length} of a factorization $\mathbf x=(x_1,x_2,x_3)\in \mathsf Z(s)$ as $|\mathbf x|=x_1+x_2+x_3$. We will use $|\mathbf x|=x_1+x_2+x_3$ for any $\mathbf x\in\mathbb Z^3$. The \emph{set of lengths} of $s\in S$ is $\mathcal L(s)=\{|\mathbf x|\mid \mathbf x\in\mathsf Z(s)\}$. It is easy to see that $\mathcal L(s) \subset [0,s]$, and consequently $\mathcal L(s)$ is finite. So it is of the form $\mathcal L(s)=\{m_1,\ldots ,m_k\}$ for some positive integers $m_1<m_2<\cdots <m_k$.  The set
\[
\Delta(s)=\{m_i-m_{i-1} \mid 2\le i\le k\}.
\]
is known as the \emph{Delta set} of $s \in S$, and 
the \emph{Delta set} of $S$ is 
\[\Delta(S)=\cup_{s \in S}\Delta(s).\]

As a particular instance of \cite[Lemma 3]{G}, we get the following result.

\begin{theorem}
Let $S$ be a numerical semigroup. Then 
\[
\min\Delta(M)=\gcd\Delta(M).
\]
Set $d=\gcd\Delta(M)$. There exists $k\in \mathbb N\setminus\{0\}$ such that
\[
\Delta(M)\subseteq\{d,2d,\ldots ,kd\}.
\]
\end{theorem}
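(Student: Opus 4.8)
The plan is to establish the sharper equality $\min\Delta(M)=\gcd\Delta(M)$ first, and then deduce the inclusion almost for free. Indeed, once we know $d=\gcd\Delta(M)$, every element of $\Delta(M)$ is a multiple of $d$; since $\Delta(M)$ is a finite set (e.g.\ by \cite{CHK}), its maximum equals $kd$ for some $k\in\mathbb N\setminus\{0\}$, and then $\Delta(M)\subseteq\{d,2d,\ldots,kd\}$. (We tacitly assume $M$ is not half-factorial, so that $\Delta(M)\neq\emptyset$ and the quantities $\min\Delta(M)$, $\gcd\Delta(M)$ make sense.) Since $\gcd\Delta(M)$ divides every element of $\Delta(M)$, in particular it divides $\min\Delta(M)$; so the whole point is to prove the reverse divisibility, namely that $m:=\min\Delta(M)$ divides every $\delta\in\Delta(M)$.

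I would argue by contradiction. Fix $a\in M$ realizing the minimum: there are factorizations of $a$ whose lengths $u$ and $u+m$ are consecutive in $\mathcal L(a)$. Suppose that some $\delta\in\Delta(M)$ is not a multiple of $m$, realized by two consecutive lengths $v$ and $v+\delta$ in $\mathcal L(b)$ for a suitable $b\in M$. Write $\delta=qm+r$ with $0<r<m$; since $\delta\geq m$ we have $q\geq 1$.

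The \emph{key step} is to produce an element whose set of lengths has a gap strictly smaller than $m$. Take $s=b+qa$. Combining $j$ copies of the length-$(u+m)$ factorization of $a$ with $q-j$ copies of the length-$u$ one, for $0\le j\le q$, shows that $\mathcal L(qa)$ contains every integer in $\{qu,qu+m,\ldots,qu+qm\}$; adding to any of these either of the two distinguished factorizations of $b$ gives
\[
\{(v+qu)+im\mid 0\le i\le q\}\ \cup\ \{(v+qu)+qm+r+im\mid 0\le i\le q\}\ \subseteq\ \mathcal L(s).
\]
In particular both $x:=(v+qu)+qm$ and $x+r$ belong to $\mathcal L(s)$. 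Since $0<r<m$, the least element of $\mathcal L(s)$ exceeding $x$ lies within distance $r$ of $x$, hence is consecutive to $x$ in $\mathcal L(s)$, so $\Delta(s)$ — and therefore $\Delta(M)$ — contains a positive integer at most $r<m$, contradicting the minimality of $m$. Thus $m\mid\delta$ for every $\delta\in\Delta(M)$, which yields $m=\gcd\Delta(M)$, and the inclusion follows as explained above.

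The only genuinely delicate point is the last implication: from ``$x$ and $x+r$ both lie in $\mathcal L(s)$'' to ``$\Delta(s)$ has an element $<m$''. This is exactly where the inequality $0<r<m$ is used, to guarantee that the relevant consecutive gap of the finite set $\mathcal L(s)$ is bounded above by $r$. Everything else — the existence of the elements $a,b$ realizing the distances, the explicit factorizations of $qa$, and the observation $q\geq1$ — is routine.
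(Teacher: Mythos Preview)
The paper does not supply its own proof of this theorem: it is stated as ``a particular instance of \cite[Lemma 3]{G}'' and left at that. Your argument is correct and is essentially the classical proof of the identity $\min\Delta(M)=\gcd\Delta(M)$ for commutative, cancellative, atomic monoids with finite sets of lengths. The reduction to showing $m\mid\delta$ is right, the construction of $s=b+qa$ and the arithmetic of its length set are carried out accurately, and the final step---that two lengths $x$ and $x+r$ with $0<r<m$ force some consecutive gap in $\mathcal L(s)$ to be at most $r$---is exactly the point that makes the argument work. The appeal to finiteness of $\Delta(M)$ via \cite{CHK} is legitimate in the numerical-semigroup setting, and the deduction of the inclusion $\Delta(M)\subseteq\{d,2d,\ldots,kd\}$ from $d=\gcd\Delta(M)$ and finiteness is immediate, as you say.
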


Actually this $k$ is fully determined in our setting in \cite{CGLMS}.

The goal of this paper is to describe a fast procedure to compute this set. We start recalling some results and definitions.

Given $\{i,j,k\}=\{1,2,3\}$, define
\[
 c_i=\min\{k\in\mathbb{Z}^+ \mid kn_i \in\langle n_j,n_k \rangle\}.
\]

Then there exists $r_{ij},r_{ik}\in \mathbb N$ such that 
\[ c_in_i=r_{ij}n_j+r_{ik}n_k.\]

The condition $\gcd(n_1,n_2,n_3)=1$ is equivalent to $\#(\mathbb N\setminus S)<\infty$. Let $\mathrm F=\max(\mathbb Z\setminus S)$, the \emph{Frobenius number} of $S$. We say that $S$ is \emph{symmetric} if whenever $x\in \mathbb Z\setminus S$, then $\mathrm F-x\in S$. 

\begin{proposition}[\protect{\cite[Theorem 3]{J}}]\label{cr}
If $S$ is not symmetric, then the $r_{ij},r_{ik} \in \mathbb{Z}^+$ are unique. Moreover, $c_i = r_{ji}+r_{ki}$.
\end{proposition}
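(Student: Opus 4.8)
The plan is to deduce everything from the minimality of $c_i$ together with the structure theory of complete intersection numerical semigroups. The key preliminary observation is that if $\mathbf x\in\mathsf Z(c_in_i)$ has $i$-th coordinate $x_i>0$, then $(c_i-x_i)n_i$ lies in $\langle n_j,n_k\rangle$ with $0\le c_i-x_i<c_i$, so minimality of $c_i$ forces $x_i=c_i$; hence every factorization of $c_in_i$ other than the one using $c_i$ copies of $n_i$ has zero $i$-th coordinate. So the statement amounts to three things: (a) there is exactly one ``other'' factorization, (b) it has both remaining coordinates positive, and (c) the identity $c_i=r_{ji}+r_{ki}$.

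For (a) and (b) I would argue by contraposition, showing that a failure produces a common multiple that is ``as small as an l.c.m.''. If some relation degenerates, say $c_in_i=r_{ik}n_k$ (that is, $r_{ij}=0$), then $n_k\mid c_in_i$; writing $\delta=\gcd(n_i,n_k)$ this gives $\frac{n_k}{\delta}\mid c_i$, while $\frac{n_k}{\delta}n_i=\frac{n_i}{\delta}n_k\in\langle n_k\rangle$ gives $c_i\le\frac{n_k}{\delta}$, so $c_in_i=\operatorname{lcm}(n_i,n_k)$. Likewise, if $c_in_i$ has two distinct factorizations supported on $\{n_j,n_k\}$, their difference is a positive common multiple of $n_j$ and $n_k$, and together with the minimality bounds $c_i\le\frac{n_j}{\gcd(n_i,n_j)}$ and $c_i\le\frac{n_k}{\gcd(n_i,n_k)}$ this again pins $c_in_i$ down to a very rigid size. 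In either situation I would then check that the pair $\{n_a,n_b\}$ carrying the l.c.m.\ satisfies $n_c\in\langle n_a/\gcd(n_a,n_b),\,n_b/\gcd(n_a,n_b)\rangle$ for the remaining generator $n_c$; by the gluing characterization of complete intersection numerical semigroups of embedding dimension three, $S$ is then a complete intersection, hence symmetric, contrary to hypothesis. This passage from the l.c.m.\ rigidity to the gluing is the heart of the argument and the step I expect to cost the most work.

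Granting (a) and (b), I would next record the inequalities $r_{jk}<c_k$ for all $j\ne k$. Indeed, assuming $r_{jk}\ge c_k$ and substituting $c_kn_k=r_{ki}n_i+r_{kj}n_j$ into $c_jn_j=r_{ji}n_i+(r_{jk}-c_k)n_k+c_kn_k$ yields $(c_j-r_{kj})n_j=(r_{ji}+r_{ki})n_i+(r_{jk}-c_k)n_k$, whose right-hand side is a strictly positive element of $\langle n_i,n_k\rangle$ by (b); minimality of $c_j$ then gives $c_j-r_{kj}\ge c_j$, i.e.\ $r_{kj}\le 0$, contradicting (b).

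Finally, for (c), consider the vector $w=\rho_1+\rho_2+\rho_3$, where $\rho_i$ is the relation $c_in_i=r_{ij}n_j+r_{ik}n_k$ written as an element of $\{v\in\mathbb Z^3:v\cdot(n_1,n_2,n_3)=0\}$; explicitly $w=(c_1-r_{21}-r_{31},\,c_2-r_{12}-r_{32},\,c_3-r_{13}-r_{23})$, and $w\cdot(n_1,n_2,n_3)=0$. If $w\ne 0$, then being orthogonal to a positive vector it has entries of both signs. If exactly one coordinate $w_i$ is positive, then $w_in_i\in\langle n_j,n_k\rangle$ with $w_i>0$, so $c_i\le w_i=c_i-r_{ji}-r_{ki}$, impossible by (b). If two coordinates are positive and $w_k<0$, then $(-w_k)n_k\in\langle n_i,n_j\rangle$ with $-w_k>0$, so $c_k\le -w_k=r_{ik}+r_{jk}-c_k$, contradicting $r_{ik},r_{jk}<c_k$. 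Hence $w=0$, which is precisely $c_i=r_{ji}+r_{ki}$ for each $i$; the only care needed here is the index bookkeeping, the substantive difficulty having been the gluing argument above.
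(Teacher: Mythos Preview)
The paper does not give its own proof of this proposition: it is stated with a citation to Johnson~\cite{J} and used as a black box thereafter. So there is no argument in the paper to compare your proposal against.

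As for the proposal itself, the second half is sound. The inequality $r_{jk}<c_k$ follows exactly as you describe, and the vanishing of the vector $w=(c_1-r_{21}-r_{31},\,c_2-r_{12}-r_{32},\,c_3-r_{13}-r_{23})$ via the two sign cases is a clean and correct way to obtain $c_i=r_{ji}+r_{ki}$ once uniqueness and strict positivity of the $r_{ij}$ are in hand.

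The gap is exactly where you flag it: the passage from ``some $r_{ij}$ vanishes'' or ``$c_in_i$ has two factorizations over $\{n_j,n_k\}$'' to ``$S$ is symmetric''. In the degenerate case $r_{ij}=0$ you correctly obtain $c_in_i=\operatorname{lcm}(n_i,n_k)$, but from this alone you still need to show the remaining generator lies in $\langle n_i/a,\,n_k/a\rangle$ with $a=\gcd(n_i,n_k)\ge 2$, and your sketch does not supply that (nor does it rule out $a=1$). In the non-uniqueness case your inequality chain gives $\operatorname{lcm}(n_j,n_k)\le c_in_i\le\min(\operatorname{lcm}(n_i,n_j),\operatorname{lcm}(n_i,n_k))$, which is suggestive but not yet a gluing. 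These steps can be completed (this is essentially Herzog's classification of embedding-dimension-three complete intersections), but as written the proposal leaves the central implication unproved; you would need to carry out that argument, or simply cite Johnson or Herzog as the paper does.
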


From $n_1<n_2<n_3$ we obtain the following result.

\begin{lemma}
Under the standing hypothesis, $c_1 > r_{12}+r_{13}$ and $c_3 < r_{31}+r_{32}$.
\end{lemma}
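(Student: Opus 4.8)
The plan is to read off both inequalities directly from the defining identities $c_1n_1 = r_{12}n_2 + r_{13}n_3$ and $c_3n_3 = r_{31}n_1 + r_{32}n_2$, using nothing more than the ordering $n_1<n_2<n_3$ and the positivity of the coefficients recorded in Proposition~\ref{cr}. The underlying point is that these inequalities compare the length $c_i$ of the factorization $c_ie_i$ of $c_in_i$ with the length $r_{ij}+r_{ik}$ of the "competing" factorization $r_{ij}e_j+r_{ik}e_k$ of the same element.

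For the first inequality I would start from $c_1n_1 = r_{12}n_2 + r_{13}n_3$. Since $n_2>n_1$ and $n_3>n_1$, replacing $n_2$ and $n_3$ by $n_1$ on the right-hand side decreases it, giving
\[
c_1n_1 = r_{12}n_2 + r_{13}n_3 > r_{12}n_1 + r_{13}n_1 = (r_{12}+r_{13})n_1,
\]
and dividing by $n_1>0$ yields $c_1>r_{12}+r_{13}$.

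For the second inequality I would argue symmetrically. In $c_3n_3 = r_{31}n_1 + r_{32}n_2$ we now have $n_1<n_3$ and $n_2<n_3$, so
\[
c_3n_3 = r_{31}n_1 + r_{32}n_2 < r_{31}n_3 + r_{32}n_3 = (r_{31}+r_{32})n_3,
\]
and dividing by $n_3>0$ gives $c_3<r_{31}+r_{32}$.

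There is essentially no obstacle here; the only detail worth a word is the strictness of the middle inequality in each display. This is guaranteed because $c_in_i$ is a strictly positive element of $\langle n_j,n_k\rangle$, so its representation $r_{ij}n_j+r_{ik}n_k$ cannot have both coefficients zero (indeed, by Proposition~\ref{cr} both are strictly positive in the nonsymmetric case), and hence at least one of the replacements $n_j\mapsto n_i$, $n_k\mapsto n_i$ is performed with a nonzero coefficient, forcing a strict inequality rather than merely a non-strict one.
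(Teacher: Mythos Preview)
Your argument is correct and is exactly the one the paper has in mind: the paper simply states that the lemma follows ``from $n_1<n_2<n_3$'' without writing out the details, and your proof supplies precisely those details via the identities $c_in_i=r_{ij}n_j+r_{ik}n_k$. Your remark on strictness is also well placed.
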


Set 
\[\delta_i=|c_i-r_{ij}-r_{ik}|\] 
for every $\{i,j,k\}=\{1,2,3\}$. By the previous lemma $\delta_1=c_1-r_{12}-r_{13}$ and $\delta_3=r_{31}+r_{32}-c_3$.
Also, from Proposition \ref{cr}, $\delta_2=|\delta_1-\delta_3|$.

\begin{lemma}[\protect{\cite[Corollary 3.1]{CGLMS}}]\label{minmax}
Under the standing hypothesis, $\min \Delta(S)=\gcd(\delta_1,\delta_3)$ and $\max \Delta(S) = \max\{\delta_1,\delta_3\}$.
\end{lemma}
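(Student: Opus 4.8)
The plan is to reduce everything to the three Betti elements $b_i=c_in_i$ of $S$. Two external facts will drive the argument: the result quoted above that $\min\Delta(S)=\gcd\Delta(S)$, and the theorem of \cite{CGLMS} that, since the sets of lengths of a numerical semigroup are finite, $\max\Delta(S)$ is attained on a Betti element, so that $\max\Delta(S)=\max\{\max\Delta(b_1),\max\Delta(b_2),\max\Delta(b_3)\}$. I will also use the well-known description of $S$ in the nonsymmetric embedding dimension three case (\cite[Chapter~9]{numericalsgps}): the minimal presentation is built from the three relations pairing $(c_1,0,0)$ with $(0,r_{12},r_{13})$, $(0,c_2,0)$ with $(r_{21},0,r_{23})$, and $(0,0,c_3)$ with $(r_{31},r_{32},0)$; in particular the $b_i$ are pairwise distinct, and the lattice $L=\{\mathbf v\in\mathbb Z^3 \mid v_1n_1+v_2n_2+v_3n_3=0\}$ equals $\mathbb Z\rho_1\oplus\mathbb Z\rho_3$ with $\rho_1=(c_1,-r_{12},-r_{13})$ and $\rho_3=(-r_{31},-r_{32},c_3)$, so that $|\rho_1|=\delta_1$ and $|\rho_3|=-\delta_3$.

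The core step is to determine $\mathsf Z(b_i)$ exactly. Take $b_1=c_1n_1$. If $\mathbf x\in\mathsf Z(b_1)$ has $x_1>0$ then $(c_1-x_1)n_1=x_2n_2+x_3n_3\in\langle n_2,n_3\rangle$ with $0\le c_1-x_1<c_1$, so minimality of $c_1$ forces $\mathbf x=(c_1,0,0)$. Otherwise $x_1=0$ and $(0,x_2,x_3)$ is an expression of $c_1n_1$ in $\langle n_2,n_3\rangle$; by Proposition \ref{cr} the only such expression with $x_2,x_3$ both positive is $(0,r_{12},r_{13})$, so any remaining one is $(0,c_1n_1/n_2,0)$ or $(0,0,c_1n_1/n_3)$. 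The point is to rule these ``exceptional'' factorizations out. For instance, if $c_1n_1=tn_3$ then $tn_3=c_1n_1\in\langle n_1\rangle\subseteq\langle n_1,n_2\rangle$ gives $c_3\le t$; writing $t=c_3+u$ and combining the three defining relations with the identities $c_i=r_{ji}+r_{ki}$ of Proposition \ref{cr}, a short manipulation yields $r_{12}n_2=(r_{23}+u)n_3\in\langle n_3\rangle\subseteq\langle n_1,n_3\rangle$, whence $c_2\le r_{12}<c_2$, a contradiction; hence $u=0$ and $c_3n_3=tn_3=c_1n_1=b_1$, contradicting $b_3\ne b_1$. The case $c_1n_1=tn_2$ is symmetric (it would force $b_2=b_1$). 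Thus $\mathsf Z(b_1)=\{(c_1,0,0),(0,r_{12},r_{13})\}$, so $\mathcal L(b_1)=\{r_{12}+r_{13},c_1\}$ and, since $c_1>r_{12}+r_{13}$ by the preceding lemma, $\Delta(b_1)=\{\delta_1\}$. The same reasoning, permuting the roles of $n_1,n_2,n_3$, gives $\mathsf Z(b_3)=\{(0,0,c_3),(r_{31},r_{32},0)\}$ and $\mathsf Z(b_2)=\{(0,c_2,0),(r_{21},0,r_{23})\}$, so $\Delta(b_3)=\{\delta_3\}$ and $\Delta(b_2)=\{\delta_2\}$ (or $\emptyset$, when $\delta_2=0$), the latter using $\delta_2=|c_2-r_{21}-r_{23}|$, which again follows from Proposition \ref{cr}.

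With this in hand the two conclusions are short. For the maximum, the Betti reduction gives $\max\Delta(S)=\max\{\delta_1,\delta_2,\delta_3\}$, and since $\delta_2=|\delta_1-\delta_3|\le\max\{\delta_1,\delta_3\}$ this equals $\max\{\delta_1,\delta_3\}$. For the minimum, on the one hand $\{\delta_1\}=\Delta(b_1)\subseteq\Delta(S)$ and $\{\delta_3\}=\Delta(b_3)\subseteq\Delta(S)$, so $\gcd\Delta(S)$ divides $\gcd(\delta_1,\delta_3)$; on the other hand any two factorizations of any $s\in S$ differ by an element of $L=\mathbb Z\rho_1\oplus\mathbb Z\rho_3$, so their lengths differ by an integer combination of $\delta_1$ and $\delta_3$, hence by a multiple of $\gcd(\delta_1,\delta_3)$, so that every element of $\Delta(S)$ is a multiple of $\gcd(\delta_1,\delta_3)$ and therefore $\gcd(\delta_1,\delta_3)$ divides $\gcd\Delta(S)$. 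Together with $\min\Delta(S)=\gcd\Delta(S)$ this gives $\min\Delta(S)=\gcd(\delta_1,\delta_3)$. The one genuinely delicate ingredient, which is where I would expect to spend the effort, is the exclusion of the exceptional factorizations of the Betti elements in the second paragraph — equivalently, the rigidity statement that in this setting each Betti element has exactly two factorizations; the rest is bookkeeping with Proposition \ref{cr} and the lemma preceding the statement.
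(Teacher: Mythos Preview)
The paper does not give its own proof of this lemma: it is stated with a citation to \cite[Corollary~3.1]{CGLMS} and no argument. So there is nothing to compare against in the paper proper; your write-up is effectively a reconstruction of the cited result, and it is essentially the right one. The two external inputs you invoke---$\min\Delta(S)=\gcd\Delta(S)$ and the reduction of $\max\Delta(S)$ to Betti elements---are exactly the facts the paper imports from \cite{G} and \cite{CGLMS}, and your lattice argument for divisibility by $\gcd(\delta_1,\delta_3)$ is the paper's Proposition~\ref{fact0} read in lengths.

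One passage is garbled. In ruling out a factorization $(0,0,t)$ of $b_1$, you correctly derive $r_{12}n_2=(r_{23}+u)n_3$ and then $c_2\le r_{12}<c_2$; that is already the contradiction, valid for every $u\ge 0$ since $r_{23}>0$. The clause ``hence $u=0$ and $c_3n_3=tn_3=c_1n_1=b_1$, contradicting $b_3\ne b_1$'' does not follow from what precedes it and should simply be deleted. (If you prefer a Betti-distinctness argument instead, note that $c_1n_1=tn_3$ with $t\ge c_3$ already gives $(c_1-r_{31})n_1 - r_{32}n_2=(t-c_3)n_3\ge 0$; combined with minimality of $c_2$ this forces $t=c_3$ and hence $b_1=b_3$, again impossible.) Apart from this stray sentence, the computation of $\mathsf Z(b_i)$ is clean, and the conclusions for $\min$ and $\max$ follow as you wrote.
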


\begin{remark}
In light of the last lemma, we can consider $\delta_1\neq\delta_3$ because in other case we will have $\min \Delta(S)=\max \Delta(S)= \delta_1=\delta_3$. And then $\Delta(S)=\{\delta_1\}$.
\end{remark}

Define $\varphi\colon\mathbb N^3\rightarrow S$ as $\varphi(x_1,x_2,x_3)=x_1n_1+x_2n_2+x_3n_3$. Then $\varphi$ is a monoid epimorphism and thus $S\cong \mathbb N^3/\ker \varphi$, where $\ker \varphi=\{(\mathbf a,\mathbf b)\in \mathbb N^3\times \mathbb N^3\mid \varphi(\mathbf a)=\varphi(\mathbf b)\}$. Associated to $\ker \varphi$, we define the subgroup $M=\{\mathbf a-\mathbf b \mid (\mathbf a,\mathbf b)\in \ker \varphi\}$ of $\mathbb Z^3$. Notice that if $s\in S$ and $\mathbf x,\mathbf y\in \mathsf Z(s)$, then $\mathbf x-\mathbf y\in M$.

A \emph{presentation} of $S$ is a system of generators of the congruence $\ker \varphi$. It is well known (see for instance \cite[Example 8.23]{RG}) that 
\[ 
\sigma=\{((c_1,0,0),(0,r_{12},r_{13})),((0,c_2,0),(r_{21},0,r_{23})),((0,0,c_3),(r_{31},r_{32},0))\}
\]
is a (minimal) presentation of $S$. It follows easily that if we set 
\[\mathbf v_1= (c_1,-r_{12},-r_{13}),\ \mathbf v_2=(-r_{21},c_2,-r_{23}) \hbox{ and }\mathbf v_3=(r_{31},r_{32},-c_3),\] 
then $M$ is generated as a group by $\{\mathbf v_1,\mathbf v_2,\mathbf v_3\}$. In light of Proposition \ref{cr}, $\mathbf v_2=\mathbf v_3-\mathbf v_1$, and consequently we obtain the following result.  

\begin{proposition}\label{fact0}
Let $s \in S$ and $\mathbf x,\mathbf y\in \mathsf{Z}(s)$. Then there exists $\lambda_1,\lambda_3\in \mathbb Z$ such that 
$
\mathbf x-\mathbf y=\lambda_1\mathbf v_1+\lambda_3 \mathbf v_3$.

\end{proposition}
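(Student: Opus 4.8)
The proposition is essentially a bookkeeping consequence of the two observations recorded just above its statement, and the plan is to chain them together. First I would note that a factorization of $s$ is, by definition, a preimage of $s$ under $\varphi$, so if $\mathbf x,\mathbf y\in\mathsf Z(s)$ then $\varphi(\mathbf x)=\varphi(\mathbf y)=s$, hence $(\mathbf x,\mathbf y)\in\ker\varphi$ and therefore $\mathbf x-\mathbf y\in M$.

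Next I would invoke the fact, already established, that $M$ is generated as a subgroup of $\mathbb Z^3$ by $\{\mathbf v_1,\mathbf v_2,\mathbf v_3\}$: this produces integers $\mu_1,\mu_2,\mu_3$ with $\mathbf x-\mathbf y=\mu_1\mathbf v_1+\mu_2\mathbf v_2+\mu_3\mathbf v_3$. The only genuine computation is the identity $\mathbf v_2=\mathbf v_3-\mathbf v_1$, which one checks coordinate by coordinate using Proposition~\ref{cr} (the three scalar equalities $c_1=r_{21}+r_{31}$, $c_2=r_{12}+r_{32}$, $c_3=r_{13}+r_{23}$ are exactly what is needed). Substituting this into the expression above gives $\mathbf x-\mathbf y=(\mu_1-\mu_2)\mathbf v_1+(\mu_2+\mu_3)\mathbf v_3$, so $\lambda_1=\mu_1-\mu_2$ and $\lambda_3=\mu_2+\mu_3$ do the job.

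There is no real obstacle here; the content of the statement has been front-loaded into the discussion preceding it (the explicit presentation $\sigma$, the resulting generators $\mathbf v_i$ of $M$, and the relation $\mathbf v_2=\mathbf v_3-\mathbf v_1$). The one point worth flagging for the reader is that $\lambda_1,\lambda_3$ lie in $\mathbb Z$ and need not be nonnegative, even though $\mathbf x$ and $\mathbf y$ are; this is precisely why one passes to the lattice $M$ rather than staying inside $\ker\varphi$. If one also wanted uniqueness of the pair $(\lambda_1,\lambda_3)$ — which is not claimed here — it would follow from $\mathbf v_1$ and $\mathbf v_3$ being linearly independent over $\mathbb Q$, which holds because $\{n_1,n_2,n_3\}$ minimally generates a semigroup of embedding dimension three.
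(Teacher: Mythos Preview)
Your argument is correct and matches the paper's own reasoning: the paper does not give a separate proof environment for this proposition, but simply records that $M$ is generated by $\{\mathbf v_1,\mathbf v_2,\mathbf v_3\}$, that $\mathbf v_2=\mathbf v_3-\mathbf v_1$ by Proposition~\ref{cr}, and states the result as an immediate consequence. Your write-up fills in exactly these steps (plus some helpful side remarks on uniqueness and signs that the paper omits).
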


\section{B\'ezout Couples}
A natural way to study $\Delta(S)$ passes through a better understanding of $M$. This is because $\delta\in\Delta(S)$ if and only if 
\begin{enumerate}
\item there exists $\mathbf x,\mathbf y\in \mathsf Z(s)$ for some $s\in S$, such that $|\mathbf x|>|\mathbf y|$ and $\delta=|\mathbf x|-|\mathbf y|$ ($=|\mathbf x-\mathbf y|$), and
\item there is no $\mathbf z\in \mathsf Z(s)$ such that $|\mathbf x|>|\mathbf z|>|\mathbf y|$.
\end{enumerate}
The first condition relies on $M$ and for the second we introduce the concept of B\'ezout couples.

\begin{proposition} Let $\delta_1,\delta_3\in \mathbb Z^+$ and $g=\gcd(\delta_1,\delta_3)$. Then for every $i\in \mathbb Z^+$,
\begin{itemize}
\item there exists a unique couple $(\lambda_{i1},\lambda_{i3}) \in \mathbb{Z} \times \mathbb{Z}$ such that $\lambda_{i1}\frac{\delta_1}g + \lambda_{i3}\frac{\delta_3}g =  i$ and $0 < \lambda_{i3} \le  \frac{\delta_1}g$,
\item there exists a unique couple $(\mu_{i1},\mu_{i3}) \in \mathbb{Z} \times \mathbb{Z}$ such that $\mu_{i1}\frac{\delta_1}g + \mu_{i3}\frac{\delta_3}g =  i$ and $0 < \mu_{i1} \le \frac{\delta_3}g$.
\end{itemize}
\end{proposition}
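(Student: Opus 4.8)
The plan is to reduce both items to the coprime case and then invoke the elementary fact that a complete residue system modulo a positive integer $a$ contains exactly one element in any half-open interval of length $a$. Write $a=\delta_1/g$ and $b=\delta_3/g$, so that $\gcd(a,b)=1$. By B\'ezout's identity there are integers $u,v$ with $ua+vb=1$, whence $(x_0,y_0)=(iu,iv)$ solves $xa+yb=i$. Since $\gcd(a,b)=1$, the set of all integer solutions of $xa+yb=i$ is exactly $\{(x_0-tb,\,y_0+ta)\mid t\in\mathbb Z\}$; this classical description of the solution set of a linear Diophantine equation in two unknowns is the only place where coprimality enters.

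For the first item I would look at the second coordinate $y_0+ta$ as $t$ ranges over $\mathbb Z$: these integers form a single residue class modulo $a$, so exactly one of them lies in $\{1,2,\dots,a\}$, i.e.\ in $(0,a]=(0,\delta_1/g]$. Fixing that value of $t$ produces the couple $(\lambda_{i1},\lambda_{i3})$. Existence is the existence of such a $t$, and uniqueness follows because $\{(x_0-tb,\,y_0+ta)\}$ is \emph{all} of the solutions and distinct values of $t$ give distinct second coordinates (as $a>0$). The second item is entirely symmetric: rewrite the solution set as $\{(x_0+sa,\,y_0-sb)\mid s\in\mathbb Z\}$ and pick the unique $s$ with $x_0+sa\in\{1,\dots,b\}=(0,\delta_3/g]\cap\mathbb Z$.

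There is essentially no obstacle here; the only points deserving a word of care are that the interval is half-open, so it contains exactly $a$ (respectively $b$) consecutive integers and the ``unique representative'' claim is literally correct, and that the possibility $\lambda_{i3}=0$ (respectively $\mu_{i1}=0$) is automatically excluded since $0\notin(0,a]$. Alternatively, one can bypass B\'ezout altogether: since $\gcd(a,b)=1$, multiplication by $b$ is a bijection of $\mathbb Z/a\mathbb Z$, hence there is a unique $\lambda_{i3}\in\{1,\dots,a\}$ with $\lambda_{i3}b\equiv i\pmod a$, and then $\lambda_{i1}:=(i-\lambda_{i3}b)/a$ is forced to be an integer, which makes uniqueness completely transparent; the $\mu$ statement is obtained by exchanging the roles of $a$ and $b$.
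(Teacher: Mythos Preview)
Your argument is correct and is precisely the kind of elementary number-theoretic reasoning the paper has in mind; the paper itself simply writes ``Follows from elementary number theoretic arguments'' without further detail. Both routes you sketch (parametrising the solution set via B\'ezout and picking the unique representative in a half-open interval, or using that multiplication by $b$ is a bijection on $\mathbb Z/a\mathbb Z$) are standard and either would be acceptable here.
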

\begin{proof}
Follows from elementary number theoretic arguments.
\end{proof}
From now on, we will assume that $\gcd(\delta_1,\delta_3)=1$, otherwise we normalize $\delta_1$ and $\delta_3$ by $\gcd(\delta_1,\delta_3)$ as in the statement of the last proposition. 

\begin{defn}\label{couples}
Let $\delta_1,\delta_3 \in \mathbb{Z}^+$ be such that $\gcd(\delta_1,\delta_3)=1$, and let $i \in \{1,\ldots,\max\{\delta_1,\delta_3\}\}$.
\begin{enumerate}
\item Define the $\lambda$-B\'ezout couple of $i \in \mathbb{Z}^+$ as the unique couple $(\lambda_{i1},\lambda_{i3}) \in \mathbb{Z} \times \mathbb{Z}$ such that $\lambda_{i1}\delta_1 + \lambda_{i3}\delta_3 = i$ and $0 < \lambda_{i3} \le \delta_1$. We will denote this by $\bs \lambda_i =(\lambda_{i1},\lambda_{i3})$.
\item Define the {$\mu$-B\'ezout couple} of $i \in \mathbb{Z}^+$ as the unique couple $(\mu_{i1},\mu_{i3}) \in \mathbb{Z} \times \mathbb{Z}$ such that $\mu_{i1}\delta_1 + \mu_{i3}\delta_3 = i$ and $0 < \mu_{i1} \le \delta_3$. We will write $\bs \mu_i = (\mu_{i1},\mu_{i3})$.
\end{enumerate}
Set 
\[\mathcal B^{(\bs \lambda)}_{\delta_1,\delta_3}=\big\{\bs \lambda_i \mid i\in \{1,\ldots,\max\{\delta_1,\delta_3\}\}\big\} \quad \hbox{and}\quad 
\mathcal B^{(\bs \mu)}_{\delta_1,\delta_3}=\{\bs \mu_i \mid i\in \{1,\ldots,\max\{\delta_1,\delta_3\}\}\big\}.\]
We will say that a pair is a B\'ezout couple if it is either a $\lambda$-B\'ezout or a $\mu$-B\'ezout couple.
\end{defn}

We will associate to some particular B\'ezout couples possible values in the Delta set of $S$. For this reason, in light of Lemma \ref{minmax}, in the previous Definition we are only interested in the case $i\in\{1,\ldots,\max\{\delta_1,\delta_3\}\}$. Now we give some properties of B\'ezout couples:

\begin{lemma}\label{negativo-l1}
If $1\le i \le\max\{\delta_1,\delta_3\}$,we have
\begin{enumerate}
\item $-\delta_3<\lambda_{i1}\le 0$ and $-\delta_1< \mu_{i3}\le 0$, and 
\item $\lambda_{i1}+\delta_3=\mu_{i1}$ and $\lambda_{i3}-\delta_1=\mu_{i3}$.
\end{enumerate}
\end{lemma}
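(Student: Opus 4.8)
The plan is to prove both parts essentially by uniqueness of B\'ezout couples, constructing explicit candidates and checking the defining inequalities.

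First I would handle part (2) because part (1) is really a consequence of it. Start from the $\lambda$-B\'ezout couple $\bs\lambda_i=(\lambda_{i1},\lambda_{i3})$, which satisfies $\lambda_{i1}\delta_1+\lambda_{i3}\delta_3=i$ and $0<\lambda_{i3}\le\delta_1$. Consider the modified pair $(\lambda_{i1}+\delta_3,\lambda_{i3}-\delta_1)$. A one-line computation shows it still solves the equation $x\delta_1+y\delta_3=i$, since $(\lambda_{i1}+\delta_3)\delta_1+(\lambda_{i3}-\delta_1)\delta_3 = \lambda_{i1}\delta_1+\lambda_{i3}\delta_3 = i$. So to identify this pair with $\bs\mu_i$, by the uniqueness in Definition \ref{couples}(2) it suffices to check $0<\lambda_{i1}+\delta_3\le\delta_3$, i.e. $-\delta_3<\lambda_{i1}\le 0$. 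But that is exactly the first inequality asserted in part (1), so the two parts must be proved together: establishing $-\delta_3<\lambda_{i1}\le 0$ simultaneously gives part (1)'s first claim and, via uniqueness, identity (2)'s formula $\mu_{i1}=\lambda_{i1}+\delta_3$ (and then $\mu_{i3}=\lambda_{i3}-\delta_1$ follows from subtracting the two defining equations).

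So the real content is the inequality $-\delta_3<\lambda_{i1}\le 0$, and symmetrically $-\delta_1<\mu_{i3}\le 0$. For the upper bound $\lambda_{i1}\le 0$: from $\lambda_{i1}\delta_1 = i-\lambda_{i3}\delta_3$ and $\lambda_{i3}\ge 1$ we get $\lambda_{i1}\delta_1\le i-\delta_3$; since $i\le\max\{\delta_1,\delta_3\}$, if $\delta_3\ge\delta_1$ this is $\le \delta_3-\delta_3=0$ when $i\le\delta_3$, and if $\delta_1>\delta_3$ one still needs $i-\delta_3 < \delta_1$, giving $\lambda_{i1}<1$, hence $\lambda_{i1}\le 0$ — I would just note $i\le\max\{\delta_1,\delta_3\}$ and $\lambda_{i3}\ge 1$ force $\lambda_{i1}\delta_1 \le \max\{\delta_1,\delta_3\}-\delta_3$, which is $<\delta_1$ in all cases (when $\delta_3\ge\delta_1$ it is $\le 0<\delta_1$; when $\delta_1>\delta_3$ it equals $\delta_1-\delta_3<\delta_1$), so $\lambda_{i1}<1$, hence $\lambda_{i1}\le 0$ since it is an integer. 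For the lower bound $\lambda_{i1}>-\delta_3$: from $\lambda_{i1}\delta_1 = i-\lambda_{i3}\delta_3 \ge i-\delta_1\delta_3 > -\delta_1\delta_3$ (using $i\ge 1$ and $\lambda_{i3}\le\delta_1$), divide by $\delta_1>0$ to get $\lambda_{i1} > -\delta_3$. The bounds for $\mu_{i3}$ follow by the symmetric argument with the roles of $\delta_1,\delta_3$ (and hence of $\lambda,\mu$) interchanged, or directly from identity (2) once it is in hand: $\mu_{i3}=\lambda_{i3}-\delta_1$ together with $0<\lambda_{i3}\le\delta_1$ gives $-\delta_1<\mu_{i3}\le 0$.

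I do not expect a serious obstacle here — the only thing requiring care is not to circularly invoke part (1) when proving part (2); the clean way is to prove the $\lambda_{i1}$ and $\mu_{i3}$ bounds of part (1) first by the direct estimates above (independently of uniqueness), then deduce the two identities of part (2) from uniqueness of the B\'ezout couples, as sketched. A minor point to watch is the boundary case $\lambda_{i3}=\delta_1$ in the lower-bound estimate, where $i-\lambda_{i3}\delta_3 = i-\delta_1\delta_3 \ge 1-\delta_1\delta_3 > -\delta_1\delta_3$ still holds strictly, so the strict inequality $\lambda_{i1}>-\delta_3$ is safe.
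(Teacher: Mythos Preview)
Your proposal is correct and essentially matches the paper's proof. Part (1) uses the same direct estimates (from $i\ge 1$, $0<\lambda_{i3}\le\delta_1$, and $i\le\max\{\delta_1,\delta_3\}$); for part (2) the paper subtracts the two defining equations and uses $\gcd(\delta_1,\delta_3)=1$ together with the bounds from (1) to force $\lambda_{i3}-\mu_{i3}=\delta_1$, which is equivalent to your uniqueness argument with the explicit candidate $(\lambda_{i1}+\delta_3,\lambda_{i3}-\delta_1)$.
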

\begin{proof}
\begin{enumerate}[(1)]
\item As $i\ge 1$, $-\lambda_{i1}\delta_1<\lambda_{i3}\delta_3 \le \delta_1\delta_3$. So $-\lambda_{i1}<\delta_3$. Similarly we have $-\mu_{i3}<\delta_1$. Since $i\le \max\{\delta_1,\delta_3\}$, we obtain:
\begin{itemize}
\item $\lambda_{i1}\delta_1=i-\lambda_{i3}\delta_3<i\le \delta_1$, if $\delta_3<\delta_1$;
\item $\lambda_{i1}\delta_1=i-\lambda_{i3}\delta_3\le 0$ if $\delta_1<\delta_3$.
\end{itemize} In both cases, we obtain $\lambda_{i1}\le 0$. Similarly $\mu_{i3}\le 0$.
\item Subtracting both expressions of $i$ from the Definition \ref{couples} we obtain:
\[
(\lambda_{i1}-\mu_{i1})\delta_1 + (\lambda_{i3}-\mu_{i3})\delta_3 = 0.
\]
And, as $\gcd(\delta_1,\delta_3)=1$ we have that there exists $a\in \mathbb Z$ such that $\lambda_{i1}-\mu_{i1}=a\delta_3$ and $\lambda_{i3}-\mu_{i3}=-a\delta_1$. We know that $0<\lambda_{i3}-\mu_{i3}<\delta_1+\delta_1=2\delta_1$, whence we have that $a=-1$.\qedhere
\end{enumerate}
\end{proof}

\begin{defn}\label{irr}
Let $\bs \lambda_i$ be the $\lambda$-B\'ezout couple of $i \in \{1,\ldots ,\max\{\delta_1,\delta_3\}\}$. We say that $\bs \lambda_i$ is irreducible if there is no $j,k \in \{1,\ldots ,\max\{\delta_1,\delta_3\}\}$ such that $\bs \lambda_i=\bs \lambda_j+\bs \lambda_k$.
Similarly, let $\bs \mu_i$ be the $\mu$-B\'ezout couple of $i \in \{1,\ldots ,\max\{\delta_1,\delta_3\}\}$. We say that $\bs \mu_i$ is {irreducible} if there is no $j,k \in \{1,\ldots ,\max\{\delta_1,\delta_3\}\}$ such that $\bs \mu_i=\bs \mu_j+\bs \mu_k$.
\end{defn}

\begin{remark}\label{withone}
 All $\lambda$-B\'ezout couples of the form $(x,1)$ and all $\mu$-B\'ezout couples of the form $(1,y)$ are irreducible Bezout couples.
\end{remark}

\begin{lemma}\label{romper}
If $\bs x_i =\bs x_j+\bs x_k$, with $\bs x\in \{\bs \lambda, \bs \mu\}$, then $i=j+k$. 
\end{lemma}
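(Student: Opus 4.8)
The plan is to exploit the fact that the index $i$ of a B\'ezout couple is recovered from the couple itself by a single fixed linear functional. Consider the group homomorphism $\psi\colon\mathbb Z^2\to\mathbb Z$ given by $\psi(a,b)=a\delta_1+b\delta_3$. By Definition \ref{couples}, if $\bs\lambda_i=(\lambda_{i1},\lambda_{i3})$ is the $\lambda$-B\'ezout couple of $i$, then $\psi(\bs\lambda_i)=\lambda_{i1}\delta_1+\lambda_{i3}\delta_3=i$, and likewise $\psi(\bs\mu_i)=\mu_{i1}\delta_1+\mu_{i3}\delta_3=i$ for the $\mu$-B\'ezout couple. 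Hence in either case $\psi(\bs x_i)=i$ for every admissible index, where $\bs x\in\{\bs\lambda,\bs\mu\}$.

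Now suppose $\bs x_i=\bs x_j+\bs x_k$ with $\bs x\in\{\bs\lambda,\bs\mu\}$ and $i,j,k\in\{1,\ldots,\max\{\delta_1,\delta_3\}\}$. Applying $\psi$ and using that it is additive,
\[
i=\psi(\bs x_i)=\psi(\bs x_j+\bs x_k)=\psi(\bs x_j)+\psi(\bs x_k)=j+k,
\]
which is the assertion. There is essentially no obstacle here: the only thing to keep in mind is that Definition \ref{couples} attaches to each B\'ezout couple precisely the integer equal to the value of $\psi$ on it, so the lemma is a one-line consequence of the linearity of $\psi$. The sign and range constraints in Definition \ref{couples} are not needed for this step; they only serve to make the couples unique, which plays no role in the present argument.
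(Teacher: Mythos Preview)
Your proof is correct and is precisely the argument the paper has in mind when it writes ``The proof follows from the definition.'' You have simply made explicit the linear functional $\psi(a,b)=a\delta_1+b\delta_3$ whose value on any B\'ezout couple is, by Definition~\ref{couples}, the index of that couple.
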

\begin{proof}
The proof follows from the definition.
\end{proof}

We will denote 
\[\mathcal I^{(\bs\lambda)}_{\delta_1,\delta_3}=\left\{\bs \lambda_i\in\mathcal B^{(\bs\lambda)}_{\delta_1,\delta_3} ~\middle|~ \bs \lambda_i\mbox{ irreducible}\right\}\quad \mbox{and}\quad \mathcal I^{(\bs\mu)}_{\delta_1,\delta_3}=\left\{\bs \mu_i\in\mathcal B^{(\bs\mu)}_{\delta_1,\delta_3} ~\middle|~ \bs \mu_i\mbox{ irreducible}\right\}.\]
Next we translate the concept of irreducibility to the subgroup $M$.

Given $\mathbf z=(z_1,z_2,z_3)\in \mathbb Z^3$ we can always write $\mathbf z=\mathbf z^+-\mathbf z^-$ with $\mathbf z^+,\mathbf z^-\in \mathbb N^3$ and $\mathbf z^+\cdot \mathbf z^-=0$ (dot product).

\begin{lemma}\label{fact}
Let $s \in S$ and $\mathbf x=(x_1,x_2,x_3) \in \mathsf Z(s)$. Let $\bs\alpha =(\alpha_1,\alpha_2,\alpha_3)\in M$. Then $\mathbf x+\bs\alpha \in \mathsf Z(s)$ if and only if $\mathbf x \ge \bs\alpha^-$.
\end{lemma}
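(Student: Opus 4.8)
The plan is to reduce membership in $\mathsf Z(s)$ to a single coordinatewise inequality. First I would observe that $\varphi$ extends uniquely to a group homomorphism $\mathbb Z^3\to\mathbb Z$ (still denoted $\varphi$), and that by the very definition of $M$ every $\bs\alpha\in M$ satisfies $\varphi(\bs\alpha)=0$: indeed $\bs\alpha=\mathbf a-\mathbf b$ with $(\mathbf a,\mathbf b)\in\ker\varphi$, so $\varphi(\bs\alpha)=\varphi(\mathbf a)-\varphi(\mathbf b)=0$. Consequently $\varphi(\mathbf x+\bs\alpha)=\varphi(\mathbf x)+\varphi(\bs\alpha)=s$, so the equation defining $\mathsf Z(s)$ is automatically satisfied by $\mathbf x+\bs\alpha$. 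Hence $\mathbf x+\bs\alpha\in\mathsf Z(s)$ if and only if $\mathbf x+\bs\alpha\in\mathbb N^3$, that is, $\mathbf x+\bs\alpha\ge\mathbf 0$ componentwise.

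Next I would invoke the decomposition $\bs\alpha=\bs\alpha^+-\bs\alpha^-$ with $\bs\alpha^+,\bs\alpha^-\in\mathbb N^3$ and $\bs\alpha^+\cdot\bs\alpha^-=0$, so that the supports of $\bs\alpha^+$ and $\bs\alpha^-$ are disjoint. The condition $\mathbf x+\bs\alpha\ge\mathbf 0$ then becomes $\mathbf x+\bs\alpha^+\ge\bs\alpha^-$, which I would verify coordinate by coordinate: for an index $j$ with $\alpha_j\ge 0$ we have $\alpha_j^-=0\le x_j\le x_j+\alpha_j^+$, so the inequality holds trivially; for an index $j$ with $\alpha_j<0$ we have $\alpha_j^+=0$, so the inequality reads exactly $x_j\ge\alpha_j^-$. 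Therefore $\mathbf x+\bs\alpha^+\ge\bs\alpha^-$ holds if and only if $x_j\ge\alpha_j^-$ for every $j$, i.e.\ $\mathbf x\ge\bs\alpha^-$, which is the assertion.

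There is no genuine obstacle here; the argument is entirely formal. The only point requiring a little care is the bookkeeping in the coordinatewise case analysis, where one must use both $\mathbf x\ge\mathbf 0$ (because $\mathbf x\in\mathbb N^3$) and the disjointness of the supports of $\bs\alpha^+$ and $\bs\alpha^-$ to conclude that the positive part of $\bs\alpha$ never interferes with the inequality being tested.
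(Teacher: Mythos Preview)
Your proof is correct and follows essentially the same approach as the paper: reduce membership in $\mathsf Z(s)$ to nonnegativity of $\mathbf x+\bs\alpha$ (using $\varphi(\bs\alpha)=0$), then check coordinatewise that $\mathbf x+\bs\alpha\ge\mathbf 0$ is equivalent to $\mathbf x\ge\bs\alpha^-$. The paper's version is simply terser, omitting the explicit verification that $\varphi(\bs\alpha)=0$ and the intermediate reformulation $\mathbf x+\bs\alpha^+\ge\bs\alpha^-$.
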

\begin{proof}
Obviously $\mathbf x + \bs\alpha \in \mathsf Z(s)$ if and only if $x_j+\alpha_j\ge 0$ for $j\in\{1,2,3\}$, and this happens if and only if $x_j \ge -\alpha_j$ whenever $\alpha_j \le 0$. This is equivalent to $x_j \ge \alpha^-_j$. 
\end{proof}
\begin{defn}\label{tau}
Let $\bs x_i=(x_{i1},x_{i3})\subseteq \mathbb Z^2$ be such that $x_{i1}\delta_1+x_{i3}\delta_3=i$, for some  $i\in\{1,\ldots,  \max\{\delta_1,\delta_3\}\}$. Denote by $\tau_{\bs x_i}$ the vector 
\begin{eqnarray*}
\mathbf{\tau}_{\bs x_i}=(\tau_{i1},\tau_{i2},\tau_{i3}):= x_{i1}\mathbf v_1+x_{i3}\mathbf v_3\in M.
\end{eqnarray*}
\end{defn}

\begin{remark}\label{mainremark}
Observe that $|\mathbf{\tau}_{\bs x_i}|=x_{i1}|\mathbf v_1|+x_{13}|\mathbf v_3|=x_{i1}\delta_1+x_{13}\delta_3=i$.
\end{remark}

\begin{lemma}\label{secondcoordinate}
Let $\bs x_i=(x_{i1},x_{i3})\in \mathbb Z^2$ be such that $x_{i1}\delta_1+x_{i3}\delta_3=i$, with $i\in\{1,\dots, \max\{\delta_1,\delta_3\}\}$. Then $x_{i1}\le 0$ if and only if $\tau_{i2}>0$.
\end{lemma}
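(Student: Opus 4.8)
The plan is to read off $\tau_{i2}$ explicitly from Definition \ref{tau} and then handle the two implications separately, each reducing to a one-line sign analysis. Recall $\mathbf v_1=(c_1,-r_{12},-r_{13})$ and $\mathbf v_3=(r_{31},r_{32},-c_3)$, so the second coordinate of $\tau_{\bs x_i}=x_{i1}\mathbf v_1+x_{i3}\mathbf v_3$ is $\tau_{i2}=-x_{i1}r_{12}+x_{i3}r_{32}$. Since $S$ is not symmetric, Proposition \ref{cr} gives $r_{12},r_{32}\in\mathbb Z^+$, i.e. both are at least $1$; this is used throughout.

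For the forward direction, assume $x_{i1}\le 0$. From $x_{i3}\delta_3=i-x_{i1}\delta_1\ge i\ge 1$ and $\delta_3>0$ we get $x_{i3}\ge 1$, and therefore $\tau_{i2}=-x_{i1}r_{12}+x_{i3}r_{32}\ge 0+r_{32}>0$, as wanted.

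For the converse I would prove the contrapositive: if $x_{i1}\ge 1$ then $\tau_{i2}\le 0$. The key point is that then $x_{i3}\delta_3=i-x_{i1}\delta_1\le\max\{\delta_1,\delta_3\}-\delta_1<\delta_3$ — splitting into the subcase $\max\{\delta_1,\delta_3\}=\delta_1$, where the right-hand side is $0<\delta_3$, and the subcase $\max\{\delta_1,\delta_3\}=\delta_3$, where it is $\delta_3-\delta_1<\delta_3$ because $\delta_1\ge 1$. Since $x_{i3}$ is an integer and $\delta_3>0$, this forces $x_{i3}\le 0$, whence $\tau_{i2}=-x_{i1}r_{12}+x_{i3}r_{32}\le -r_{12}<0$.

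The only delicate point — and the step I would be most careful about — is the use of the standing hypothesis $i\le\max\{\delta_1,\delta_3\}$ in the converse: it is precisely what pins down $x_{i3}\le 0$ once $x_{i1}\ge 1$, and without it the claim fails. Everything else is forced by the sign patterns $(+,-,-)$ and $(+,+,-)$ of the entries of $\mathbf v_1$ and $\mathbf v_3$ together with $\gcd(\delta_1,\delta_3)=1$ (so that $\delta_1,\delta_3\ge 1$).
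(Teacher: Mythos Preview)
Your proof is correct and follows essentially the same approach as the paper's: both use $i>0$ to force $x_{i3}>0$ when $x_{i1}\le 0$, and both prove the converse via the contrapositive, deducing $x_{i3}\le 0$ from $x_{i1}>0$. The paper is terser (it simply writes ``analogously'' for the reverse implication), whereas you spell out explicitly how the bound $i\le\max\{\delta_1,\delta_3\}$ is what forces $x_{i3}\le 0$ --- a clarification that is actually an improvement in exposition, since this step is not strictly analogous to the forward direction.
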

\begin{proof}
Since $x_{i1}\delta_1+x_{i3}\delta_3=i$, the condition $x_{i1}\le 0$ forces $x_{i3}>0$. So $-x_{i1}r_{12}+x_{i3}r_{32}>0$. Analogously,  $x_{i1}> 0$ implies $x_{i3}\le 0$, whence $-x_{i1}r_{12}+x_{i3}r_{32}<0$.
\end{proof}

\begin{lemma}\label{twisted}
Let $\bs x_i=(x_{i1},x_{i3})\in \mathbb Z^2$ be such that $x_{i1}\delta_1+x_{i3}\delta_3=i$, with $i\in\{1,\dots, \max\{\delta_1,\delta_3\}\}$. If   $\delta_j< \delta_k$, with $\{j,k\}=\{1,3\}$, then $|x_{ik}|\le |x_{ij}|$.
\end{lemma}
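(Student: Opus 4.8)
The plan is to reduce to the case $\delta_1<\delta_3$, so that $j=1$, $k=3$ and the hypothesis $i\le\max\{\delta_1,\delta_3\}$ reads $i\le\delta_3$; the case $\delta_3<\delta_1$ is handled symmetrically, interchanging the two coordinates throughout. Thus it suffices to deduce $|x_{i3}|\le|x_{i1}|$ from the identity $x_{i1}\delta_1+x_{i3}\delta_3=i$ together with $1\le i\le\delta_3$ and $\delta_1<\delta_3$. (For B\'ezout couples the signs are already pinned down by Lemma \ref{negativo-l1}, but the self-contained sign analysis below is cleaner and works for any such $\bs x_i$.)

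First I would pin down the sign pattern: $x_{i1}$ and $x_{i3}$ cannot both be positive, since with $\delta_1,\delta_3\ge 1$ that would force $x_{i1}\delta_1+x_{i3}\delta_3\ge\delta_1+\delta_3>\delta_3\ge i$; and they cannot both be $\le 0$, since $i\ge 1>0$. Hence exactly one of them is positive, and I would treat the two sign-cases in turn. If $x_{i1}>0\ge x_{i3}$, rewrite the relation as $-x_{i3}\delta_3=x_{i1}\delta_1-i$ and use $i\ge 1$ together with $\delta_1<\delta_3$ and $x_{i1}>0$ to bound the right-hand side by $x_{i1}\delta_1<x_{i1}\delta_3$, which gives $|x_{i3}|=-x_{i3}<x_{i1}=|x_{i1}|$. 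If $x_{i1}\le 0<x_{i3}$, rewrite it as $x_{i3}\delta_3=i+|x_{i1}|\delta_1$ and combine $i\le\delta_3$ with $\delta_1<\delta_3$ to obtain $x_{i3}\delta_3<(|x_{i1}|+1)\delta_3$, so that $x_{i3}\le|x_{i1}|$ by integrality.

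The delicate point is exactly the boundary $i=\delta_3$ of the allowed range, inside the second sign-case: it is the strict inequality $\delta_1<\delta_3$ (not merely $\delta_1\le\delta_3$) that makes the integrality step $x_{i3}\delta_3<(|x_{i1}|+1)\delta_3\Rightarrow x_{i3}\le|x_{i1}|$ go through, so both that hypothesis and the bound $i\le\max\{\delta_1,\delta_3\}$ are used essentially; the sub-case in which the nonpositive coordinate actually vanishes must be read off directly from $x_{i1}\delta_1+x_{i3}\delta_3=i$ and the constraint on $i$. Everything else is routine bookkeeping on signs.
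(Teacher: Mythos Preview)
Your approach is essentially the paper's: both case-split on the sign of one coordinate and exploit $0<i\le\delta_k$ together with $\delta_j<\delta_k$. The paper divides through by $\delta_k$ and argues with the fractional inequality $0<x_{ij}\frac{\delta_j}{\delta_k}+x_{ik}\le 1$, whereas you stay over the integers and close with an integrality step; the two presentations are interchangeable.

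There is, however, a genuine boundary gap that your final paragraph glosses over rather than resolves. In your second sign-case $x_{i1}\le 0<x_{i3}$ (with $\delta_1<\delta_3$), the chain
\[
x_{i3}\delta_3=i+|x_{i1}|\delta_1\le\delta_3+|x_{i1}|\delta_1<(|x_{i1}|+1)\delta_3
\]
needs $|x_{i1}|>0$ for the last strict inequality. When $x_{i1}=0$ you are forced to $x_{i3}=1$ and $i=\delta_3$, and then the conclusion $|x_{i3}|\le|x_{i1}|$ reads $1\le 0$: the lemma as stated actually fails at this single boundary point (e.g.\ $\delta_1=2$, $\delta_3=5$, $\bs x_5=(0,1)$, which is even the $\lambda$-B\'ezout couple for $i=\delta_3$). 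Your remark that this sub-case ``must be read off directly'' does not save it. The paper's own proof shares the same defect---its strict inequality $x_{ij}<\frac{\delta_j}{\delta_k}x_{ij}$ is only strict when $x_{ij}\ne 0$---so your argument is no worse than the original; but a clean fix is to restrict to $i<\max\{\delta_1,\delta_3\}$, or to record the single exceptional couple explicitly.
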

\begin{proof}
We have that $0<x_{ij}\delta_j+x_{ik}\delta_k=i\le\delta_k$. We can divide by $\delta_k$ to obtain: $0<x_{ij}\frac{\delta_j}{\delta_k}+x_{ik}\le 1$.
If $x_{ik}\le 0$, we have $0\le |x_{ik}|=-x_{ik}<x_{ij}\frac{\delta_j}{\delta_k}<x_{ij}=|x_{ij}|$. While if $x_{ik}> 0$, then $x_{ij}< \frac{\delta_j}{\delta_k}x_{ij}\le 1-x_{ik}\le 0$. So $x_{ij}\le -x_{ik}=-|x_{ik}|$.
\end{proof}

\begin{corollary}\label{performance}
Let $\bs x_i=(x_{i1},x_{i3})\in \mathbb Z^2$ be such that $x_{i1}\delta_1+x_{i3}\delta_3=i$, with $i\in\{1,\dots, \max\{\delta_1,\delta_3\}\}$. The following table describes the signs of both $x_{i3}$ and the coordinates of $\tau_{\bs x_i}$.
 \begin{center}
 \begin{tabular}{|l|c|c|c|c|c|r|}
 \hline
  & $x_{i1}\le 0$  & $x_{i1}>0$ \\
 \hline
 $\delta_1 > \delta_3$ & $\tau_{i2} > 0, \tau_{i3} < 0, x_{i3}>0$ & $\tau_{i2} < 0, \tau_{i3} > 0, x_{i3}\le 0$  \\
 \hline
 $\delta_3 > \delta_1$ & $\tau_{i2} > 0, \tau_{i1} < 0, x_{i3}>0$ & $\tau_{i2} < 0, \tau_{i1} > 0, x_{i3}\le 0$ \\
 \hline
  \end{tabular}
 \end{center}
\end{corollary}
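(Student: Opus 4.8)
The plan is to read the signs straight off the coordinate formulas for $\tau_{\bs x_i}$ and to combine Lemmas \ref{secondcoordinate} and \ref{twisted}. Expanding $\tau_{\bs x_i}=x_{i1}\mathbf v_1+x_{i3}\mathbf v_3$ with $\mathbf v_1=(c_1,-r_{12},-r_{13})$ and $\mathbf v_3=(r_{31},r_{32},-c_3)$ gives
\[
\tau_{i1}=c_1x_{i1}+r_{31}x_{i3},\qquad \tau_{i2}=-r_{12}x_{i1}+r_{32}x_{i3},\qquad \tau_{i3}=-r_{13}x_{i1}-c_3x_{i3}.
\]
The sign of $\tau_{i2}$ and the sign of $x_{i3}$ are already settled by Lemma \ref{secondcoordinate} and its proof: $x_{i1}\le 0$ forces $x_{i3}>0$ and $\tau_{i2}>0$, while $x_{i1}>0$ forces $x_{i3}\le 0$ and $\tau_{i2}<0$. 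So in each of the four boxes of the table only one entry remains, namely the sign of $\tau_{i3}$ when $\delta_1>\delta_3$ and the sign of $\tau_{i1}$ when $\delta_3>\delta_1$ (the Remark after Lemma \ref{minmax} lets us assume $\delta_1\neq\delta_3$, so these two rows exhaust the cases).

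Next I would record two inequalities coming from Proposition \ref{cr}: since $c_1=r_{21}+r_{31}$ and $c_3=r_{13}+r_{23}$ with $r_{21},r_{23}\in\mathbb Z^+$, we have $c_1>r_{31}$ and $c_3>r_{13}$; and the absolute-value bound from Lemma \ref{twisted}: if $\delta_1>\delta_3$ then $|x_{i1}|\le|x_{i3}|$, and if $\delta_3>\delta_1$ then $|x_{i3}|\le|x_{i1}|$. Then the case analysis is mechanical. For example, in the box $\delta_1>\delta_3$, $x_{i1}\le 0$: here $x_{i3}>0$ and $-x_{i1}=|x_{i1}|\le|x_{i3}|=x_{i3}$, so
\[
\tau_{i3}=-r_{13}x_{i1}-c_3x_{i3}\le r_{13}x_{i3}-c_3x_{i3}=(r_{13}-c_3)x_{i3}<0 .
\]
The box $\delta_1>\delta_3$, $x_{i1}>0$ is the mirror image: $x_{i3}\le 0$, and $x_{i1}\le|x_{i3}|$ with $x_{i1}\ge 1$ forces $x_{i3}<0$, so $\tau_{i3}\ge(r_{13}-c_3)x_{i3}>0$. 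The two boxes with $\delta_3>\delta_1$ are handled identically using $\tau_{i1}=c_1x_{i1}+r_{31}x_{i3}$, the bound $|x_{i3}|\le|x_{i1}|$, and $c_1>r_{31}$.

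I do not expect a genuine obstacle here; the only thing to get right is which coordinate of $\tau_{\bs x_i}$ is the one to control in each regime. The pattern is that the controlled coordinate is the one in which the \emph{larger} of $\delta_1,\delta_3$ pairs with the ``large'' entry $c_i$ (so that the dominant monomial is $c_1x_{i1}$ in $\tau_{i1}$, or $-c_3x_{i3}$ in $\tau_{i3}$), and then Lemma \ref{twisted} guarantees that this monomial decides the sign. One should also dispose of the boundary cases $x_{i1}=0$ and $x_{i3}=0$, but in each the relevant coordinate collapses to a single monomial with obvious sign, so they cause no trouble.
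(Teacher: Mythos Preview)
Your argument is correct and follows essentially the same route as the paper's proof: first invoke Lemma~\ref{secondcoordinate} for the signs of $x_{i3}$ and $\tau_{i2}$, then use Lemma~\ref{twisted} together with $c_1>r_{31}$ and $c_3>r_{13}$ (from Proposition~\ref{cr}) to pin down the remaining coordinate. In fact your derivation of the key inequalities is cleaner than the paper's, which contains apparent typos ($c_3>r_{31}$ and $c_1>r_{13}$ where $c_3>r_{13}$ and $c_1>r_{31}$ are meant).
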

\begin{proof}
  If $x_{i1}\le 0$ we have $x_{i3}>0$. 
  \begin{description}
  \item[$\delta_1>\delta_3$] from Lemma \ref{twisted}, we have $0\le -x_{i1}\le x_{i3}$; as $c_3>r_{31}$, we obtain $\tau_{i3}=-x_{i1}r_{13}-x_{i3}c_3<0$.
  \item[$\delta_1<\delta_3$] in view of Lemma \ref{twisted} again, we have that $0\le x_{i3}\le-x_{i1}$; from $c_1>r_{13}$, we obtain $\tau_{i1}<0$.
  \end{description} 
  Lemma \ref{secondcoordinate} states that $\tau_{i2}>0$.
   
  The case $x_{i1}>0$ is analogous.
\end{proof}

\begin{lemma}\label{fact2}
Let $\bs x_i$ be a reducible B\'ezout couple, with $\bs x\in \{\bs \lambda,\bs \mu\}$ and $1\le i\le \max\{\delta_1,\delta_3\}$. Then exists $l\in\mathbb Z^+$, $l<i$, such that $\tau_{\bs x_l}^- \le \tau_{\bs x_i}^-$ . 
\end{lemma}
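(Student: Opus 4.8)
The plan is to use the reducibility of $\bs x_i$ to split $\tau_{\bs x_i}$ into a sum of two $\tau$-vectors of strictly smaller index, and then to show---using the rigid sign pattern of Corollary \ref{performance}---that one of the two summands is dominated by $\tau_{\bs x_i}$ coordinate by coordinate on its negative part. First I would unwind the hypothesis. Since $\bs x_i$ is reducible, Definition \ref{irr} yields $j,k\in\{1,\dots,\max\{\delta_1,\delta_3\}\}$ with $\bs x_i=\bs x_j+\bs x_k$, and Lemma \ref{romper} forces $i=j+k$; as $j,k\ge 1$, this gives $j,k<i$. By the linearity built into Definition \ref{tau}, $\tau_{\bs x_i}=\tau_{\bs x_j}+\tau_{\bs x_k}$, i.e.\ $\tau_{im}=\tau_{jm}+\tau_{km}$ for each $m\in\{1,2,3\}$.

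Next I would record two elementary facts about the map $a\mapsto a^-=\max\{-a,0\}$ on $\mathbb Z$: (i) $\min\{a^-,b^-\}\le (a+b)^-$ for all $a,b$ (check the cases $a\ge 0$, $b\ge 0$, and $a,b<0$, the last because $(a+b)^-=a^-+b^-$ there); and (ii) if $a,b<0$, then $a^-\le (a+b)^-$ and $b^-\le (a+b)^-$, again since $(a+b)^-=a^-+b^-$.

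Now for the core of the argument. A $\lambda$-B\'ezout couple always has nonpositive first coordinate (Lemma \ref{negativo-l1}), and a $\mu$-B\'ezout couple always has positive first coordinate (Definition \ref{couples}); hence $\bs x_i,\bs x_j,\bs x_k$ all lie in the same column of the table in Corollary \ref{performance}, so $\tau_{\bs x_i},\tau_{\bs x_j},\tau_{\bs x_k}$ share a common sign on two of their three coordinates---positive on one of them, negative on another---while on the third coordinate nothing is imposed. Consider the representative case $\bs x=\bs\lambda$, $\delta_1>\delta_3$: here the second coordinates are positive and the third negative, so $\tau_{\bs x_m}^-=(\tau_{m1}^-,0,-\tau_{m3})$ for $m\in\{i,j,k\}$. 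On the second coordinate all three negative parts are $0$; on the third, fact (ii) applied to $\tau_{i3}=\tau_{j3}+\tau_{k3}$ gives $\tau_{j3}^-\le\tau_{i3}^-$ and $\tau_{k3}^-\le\tau_{i3}^-$; and on the first, fact (i) applied to $\tau_{i1}=\tau_{j1}+\tau_{k1}$ gives $\tau_{l1}^-\le\tau_{i1}^-$ for at least one $l\in\{j,k\}$. For that $l$ every coordinate of $\tau_{\bs x_l}^-$ is at most the corresponding coordinate of $\tau_{\bs x_i}^-$, i.e.\ $\tau_{\bs x_l}^-\le\tau_{\bs x_i}^-$ with $l<i$, as claimed.

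The three remaining cases ($\bs x=\bs\lambda$ with $\delta_3>\delta_1$, and $\bs x=\bs\mu$ with $\delta_1>\delta_3$ or with $\delta_3>\delta_1$) go through verbatim after permuting the roles of the first and third coordinates and/or exchanging which of the two determined coordinates is the positive one, as prescribed by Corollary \ref{performance}. I do not expect a genuine obstacle here; the only subtlety worth flagging is that the naive statement ``one of two summands has negative part $\le$ that of the sum'' is \emph{false} for general vectors of $\mathbb Z^3$ (take $(1,-1,0)+(-1,1,0)$), so the proof really does need the fact that, in each case of Corollary \ref{performance}, exactly one coordinate is sign-ambiguous---which is why the case organization around that table is essential.
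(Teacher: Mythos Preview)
Your proof is correct and follows essentially the same route as the paper: decompose $\bs x_i=\bs x_j+\bs x_k$ via reducibility, pass to $\tau_{\bs x_i}=\tau_{\bs x_j}+\tau_{\bs x_k}$, and then use the sign pattern of Corollary~\ref{performance} to handle the two ``forced'' coordinates while picking the summand that works on the remaining ambiguous one. Your abstraction of the two elementary facts about $a\mapsto a^-$ is a slightly cleaner packaging of the same case analysis the paper carries out explicitly.
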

\begin{proof}
Assume that $\bs x_i=\bs \lambda_i$ (the other case is analogous). As $\bs\lambda_i$ is reducible, it follows that there exist positive integers $j$ and $k$ such that $\bs \lambda_i =\bs \lambda_j + \bs \lambda_k$. Lemma \ref{romper} ensures that $k,j<i$, and it is easy to derive that 
\begin{equation}\label{sumavector}
\tau_{\bs \lambda_i}= \tau_{\bs \lambda_j}+\tau_{\bs \lambda_k}.
\end{equation}
From Lemma \ref{secondcoordinate}, $\tau_{i2}>0$, $\tau_{j2}>0$ and $\tau_{k2}>0$, so $\tau_{i2}^-=\tau_{j2}^-=\tau_{k2}^-=0$. For the other coordinates, we distinguish two cases.
\begin{enumerate}[(i)]
\item $\delta_1 > \delta_3$. From Corollary \ref{performance} we have $\tau_{i3}<0$, $\tau_{j3}<0$ and $\tau_{k3}<0$. Observe that $\lambda_{i3}\neq 0$ because otherwise $|\lambda_{i1}|\le |\lambda_{i3}|=0$, and thus $i=0$. So, as $\tau_{i3}=\tau_{j3}+\tau_{k3}$, and they are all negative, we deduce $\tau_{i3}^-=\tau_{j3}^-+\tau_{k3}^-$. Hence $\tau_{i3}^-\ge \tau_{j3}^-$ and $\tau_{i3}^-\ge\tau_{k3}^-$.

From the equation \eqref{sumavector} we have that $\tau_{i1}=\tau_{j1}+\tau_{k1}$. Thus, if $\tau_{i1},\tau_{j1},\tau_{k1}$ have all the same sign we can deduce, as before, $\tau^-_{j1}\le\tau^-_{i1}$ and $\tau^-_{k1}\le\tau^-_{i1}$, and in this case we can take $l=j$ or $l=k$ to finish the proof. While in the other case one and only one between $\tau_{j1},\tau_{k1}$ should be nonnegative (in other cases both nonpositive or both nonnegative implies that $\tau_{i1},\tau_{j1},\tau_{k1}$ have the same sign). We call it $\tau_{l1}$. So, for this $l$ we have $\tau^-_{l1}=0\le\tau^-_{i1}$, $0=\tau^-_{l2} \le \tau^-_{i2}=0$ and $\tau^-_{l3} \le \tau^-_{i3}$.

\item $\delta_1 < \delta_3$. Again from Corollary \ref{performance},  $\tau_{i1} < 0$,  $\tau_{j1}<0$ and $\tau_{k1}<0$. Hence from $\tau_{i1}=\tau_{j1}+\tau_{k1}$ we deduce $\tau_{i1}^-=\tau_{j1}^-+\tau_{k1}^-$. This leads to $\tau_{i1}^-\ge \tau_{j1}^-$ and $\tau_{i1}^-\ge \tau_{k1}^-$. 

Now arguing as above, but with $\tau_{i3}=\tau_{j3}+\tau_{k3}$, we have again two possibilities. 
\begin{enumerate}
\item The integers $\tau_{i3},\tau_{j3},\tau_{k3}$ have the same sign. We can take $l=j$ or $l=k$ to finish the proof. 
\item One of the $\tau_{j3},\tau_{k3}$ must be nonnegative. We choose it to conclude the proof.\qedhere
\end{enumerate}
\end{enumerate}
\end{proof}
\begin{lemma}\label{fact3}
With the same notation as the above lemma we have that exists $l\in\mathbb Z^+$, $l<i$, such that $\tau^+_{\bs x_l} \le \tau^+_{\bs x_i}$ .
\end{lemma}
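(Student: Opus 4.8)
The plan is to mirror the proof of Lemma \ref{fact2}, replacing everywhere the negative parts $\tau^{-}$ by the positive parts $\tau^{+}$ and interchanging the roles of ``nonnegative'' and ``nonpositive''. So I would assume $\bs x_i=\bs\lambda_i$ is reducible, say $\bs\lambda_i=\bs\lambda_j+\bs\lambda_k$ with $j,k\in\mathbb Z^{+}$; Lemma \ref{romper} gives $j,k<i$, and, exactly as in \eqref{sumavector}, $\tau_{\bs\lambda_i}=\tau_{\bs\lambda_j}+\tau_{\bs\lambda_k}$ (the case $\bs x_i=\bs\mu_i$ being analogous, with the roles described below interchanged).

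First I would settle the two coordinates that Corollary \ref{performance} controls. For the second coordinate, $\lambda_{i1},\lambda_{j1},\lambda_{k1}\le 0$ by Lemma \ref{negativo-l1}, so Lemma \ref{secondcoordinate} yields $\tau_{i2},\tau_{j2},\tau_{k2}>0$; hence $\tau_{i2}^{+}=\tau_{j2}^{+}+\tau_{k2}^{+}$, which forces $\tau_{j2}^{+}\le\tau_{i2}^{+}$ and $\tau_{k2}^{+}\le\tau_{i2}^{+}$. For the diagonal coordinate singled out by Corollary \ref{performance} --- the third one if $\delta_1>\delta_3$, the first one if $\delta_1<\delta_3$ --- the corresponding components of $\tau_{\bs\lambda_i},\tau_{\bs\lambda_j},\tau_{\bs\lambda_k}$ are all negative, so their positive parts all vanish and the inequality there is immediate for both choices $l=j$ and $l=k$.

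Thus only the remaining ``free'' coordinate, say the $m$-th one, needs attention. There $\tau_{im}=\tau_{jm}+\tau_{km}$, but its sign is not pinned down by Corollary \ref{performance}. If $\tau_{im},\tau_{jm},\tau_{km}$ all share a sign then (whether all $\ge 0$ or all $\le 0$) one gets $\tau_{jm}^{+}\le\tau_{im}^{+}$ and $\tau_{km}^{+}\le\tau_{im}^{+}$, so either $l=j$ or $l=k$ works. Otherwise one of $\tau_{jm},\tau_{km}$ must be nonpositive --- if both were positive so would $\tau_{im}$ be, forcing a common sign --- and taking $l$ to be its index gives $\tau_{lm}^{+}=0\le\tau_{im}^{+}$. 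Combining this with the bounds already obtained for the other two coordinates gives $\tau_{\bs x_l}^{+}\le\tau_{\bs x_i}^{+}$ with $l<i$, which is the claim.

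I expect the only delicate point to be this ``free'' coordinate, just as in Lemma \ref{fact2}: excluding the degenerate ``both nonnegative'' configuration and choosing the index $l$ whose free component is nonpositive. The rest is sign bookkeeping, and the duality $\tau^{-}\leftrightarrow\tau^{+}$ together with Corollary \ref{performance} --- which always fixes two of the three components, with opposite signs --- should make the argument carry over with only cosmetic changes; in the $\bs\mu$-case the second component contributes the trivial bound $\tau^{+}=0$ while the controlled diagonal component (now positive) contributes the additive decomposition, but the structure of the proof is the same.
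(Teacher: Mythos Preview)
Your proposal is correct and is exactly the ``analogous'' argument the paper has in mind: the paper's own proof of this lemma is the single sentence ``Analogous to the preceding lemma,'' and what you have written is precisely that analogue, with the $\tau^{-}$'s of Lemma~\ref{fact2} replaced by $\tau^{+}$'s and the sign choices flipped accordingly.
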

\begin{proof}
Analogous to the preceding lemma.
\end{proof}
We denote 
\[I_{\delta_1,\delta_3}=I^{(\bs\lambda)}_{\delta_1,\delta_3}\cup I^{(\bs\mu)}_{\delta_1,\delta_3}.\]

\begin{theorem}\label{main}
Let $S$ be a nonsymmetric numerical semigroup minimally generated by $\{n_1,n_2,n_3\}$ with $n_1<n_2<n_3$. Let $\delta_1$, $\delta_2$ and $I_{\delta_1,\delta_2}$ be defined as above. Let $g=\gcd(\delta_1,\delta_3)$. Then
\[
\Delta(S)=\left\{g i \in\mathbb N \mid  i\in\{1, \ldots, \max\{\delta_1/g,\delta_3/g\}, \bs x_i \in I_{\delta_1/g,\delta_3/g}\right\}.
\]
\end{theorem}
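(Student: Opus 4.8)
The plan is to prove the two inclusions separately, after a routine reduction. Since every element of $\Delta(S)$ is a multiple of $g=\gcd(\delta_1,\delta_3)$ and is at most $\max\{\delta_1,\delta_3\}$ (Lemma \ref{minmax}), and since $|\mathbf v_1|=\delta_1$ and $|\mathbf v_3|=\delta_3$, we may rescale $\delta_1,\delta_3$ by $g$ and assume throughout that $\gcd(\delta_1,\delta_3)=1$; the statement then reads $\Delta(S)=\{\,i\in\{1,\dots,\max\{\delta_1,\delta_3\}\}\mid \bs\lambda_i\text{ or }\bs\mu_i\text{ is irreducible}\,\}$. Recall that $i\in\Delta(S)$ exactly when there exist $s\in S$ and $\mathbf y,\mathbf x\in\mathsf Z(s)$ with $|\mathbf x|-|\mathbf y|=i$ and no factorization of $s$ of length strictly between $|\mathbf y|$ and $|\mathbf x|$; by Proposition \ref{fact0} one has $\mathbf x-\mathbf y=a\mathbf v_1+b\mathbf v_3$ with $a\delta_1+b\delta_3=i$, and moreover $\mathbf x\ge(\mathbf x-\mathbf y)^+$ and $\mathbf y\ge(\mathbf x-\mathbf y)^-$.

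For $\subseteq$, let $i\in\Delta(S)$ be realized by consecutive lengths as above, and set $\mathbf w=\mathbf x-\mathbf y$, so $|\mathbf w|=i$. Every length-$i$ vector of $M$ equals $\tau_{\bs\lambda_i}+t\,\mathbf w_0$ for a unique $t\in\mathbb Z$, where $\mathbf w_0=\delta_3\mathbf v_1-\delta_1\mathbf v_3$ generates the rank-one subgroup of $M$ consisting of the vectors of length $0$, and $\tau_{\bs\mu_i}=\tau_{\bs\lambda_i}+\mathbf w_0$ by Lemma \ref{negativo-l1}. Using the signs of the coordinates of $\mathbf w_0$ together with Corollary \ref{performance}, one checks by a short case analysis on $t$ (the cases $t\in\{0,1\}$ being immediate) that $\mathbf w^-\ge\tau^-_{\bs\lambda_i}$ or $\mathbf w^+\ge\tau^+_{\bs\lambda_i}$ when $t\le 0$, and $\mathbf w^-\ge\tau^-_{\bs\mu_i}$ or $\mathbf w^+\ge\tau^+_{\bs\mu_i}$ when $t\ge 1$. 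Fix the corresponding $\bs x\in\{\bs\lambda,\bs\mu\}$. If $\bs x_i$ were reducible, then in the first case Lemma \ref{fact2} would give $l\in\{1,\dots,i-1\}$ with $\tau^-_{\bs x_l}\le\tau^-_{\bs x_i}\le\mathbf w^-\le\mathbf y$, so by Lemma \ref{fact} the vector $\mathbf y+\tau_{\bs x_l}$ is a factorization of $s$ of length $|\mathbf y|+l$ strictly between $|\mathbf y|$ and $|\mathbf x|$; in the second case Lemma \ref{fact3} gives $\mathbf x-\tau_{\bs x_l}\in\mathsf Z(s)$ of length $|\mathbf x|-l$ strictly between. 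Either situation contradicts the choice of consecutive lengths, so $\bs x_i$ is irreducible.

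For $\supseteq$, suppose $\bs\lambda_i$ is irreducible (the case where $\bs\mu_i$ is irreducible being analogous). Put $\mathbf x=\tau^+_{\bs\lambda_i}$, $\mathbf y=\tau^-_{\bs\lambda_i}$ and $s=\varphi(\mathbf x)=\varphi(\mathbf y)$, the equality holding because $\tau_{\bs\lambda_i}\in M$. Then $\mathbf x,\mathbf y\in\mathsf Z(s)$ and $|\mathbf x|-|\mathbf y|=|\tau_{\bs\lambda_i}|=i$, so it is enough to show that $s$ has no factorization of length strictly between $|\mathbf y|$ and $|\mathbf x|$, for then these are consecutive lengths and $i\in\Delta(s)\subseteq\Delta(S)$. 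Assume for contradiction that $\mathbf z\in\mathsf Z(s)$ has $|\mathbf y|<|\mathbf z|<|\mathbf x|$, and set $\eta=\mathbf z-\mathbf y$, $\theta=\mathbf x-\mathbf z$, so that $\eta+\theta=\tau_{\bs\lambda_i}$, $\eta^-\le\mathbf y=\tau^-_{\bs\lambda_i}$, $\theta^+\le\mathbf x=\tau^+_{\bs\lambda_i}$, and $0<|\eta|,|\theta|<i$. The aim is to show, using Corollary \ref{performance} and Lemmas \ref{secondcoordinate}, \ref{twisted} and \ref{negativo-l1}, that these constraints force $\eta=\tau_{\bs\lambda_{|\eta|}}$ and $\theta=\tau_{\bs\lambda_{|\theta|}}$ (in particular that the coordinates of $\eta$ and of $\theta$ obey the defining inequalities of $\lambda$-B\'ezout couples), whence $\bs\lambda_i=\bs\lambda_{|\eta|}+\bs\lambda_{|\theta|}$ with $0<|\eta|<i$, contradicting irreducibility.

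I expect this last point to be the main obstacle. Pinning down $\eta$ and $\theta$ as genuine B\'ezout vectors — and not, say, translates of a $\lambda$-B\'ezout vector by $\mathbf w_0$ or by one of the ``trivial'' length-bearing vectors such as $(n_3,0,-n_1)/\gcd(n_1,n_3)$ — requires a careful coordinate-by-coordinate reading of $\eta^-\le\tau^-_{\bs\lambda_i}$ and $\theta^+\le\tau^+_{\bs\lambda_i}$, split according to the sign of $\delta_1-\delta_3$, with the borderline couples having a vanishing coordinate (cf.\ Remark \ref{withone}) treated on their own. The remaining ingredients — the normalization to $\gcd(\delta_1,\delta_3)=1$, the description of $\Delta(S)$ via consecutive lengths, and the arithmetic of the $\tau$-vectors — are routine, and once the case analysis on $t$ in the second paragraph is carried out the inclusion $\subseteq$ goes through cleanly.
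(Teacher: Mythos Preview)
Your overall architecture matches the paper: reduce to $\gcd(\delta_1,\delta_3)=1$, prove $\subseteq$ by assuming the relevant B\'ezout couple is reducible and producing an intermediate factorization via Lemmas~\ref{fact2}/\ref{fact3}, and prove $\supseteq$ by taking $\mathbf x=\tau_{\bs x_m}^+$, $\mathbf y=\tau_{\bs x_m}^-$ and showing any intermediate $\mathbf z$ forces a decomposition of $\bs x_m$. Your ``short case analysis on $t$'' for $\subseteq$ is exactly the paper's split into $a_1\le 0$ (your $t\le0$, hence $\bs\lambda$) versus $a_1>0$ (your $t\ge1$, hence $\bs\mu$), and your claim that $\mathbf w^{\pm}\ge\tau^{\pm}_{\bs x_i}$ in the appropriate case is precisely what the paper verifies using the sign pattern of $\mathbf v=-\mathbf w_0$ together with Corollary~\ref{performance}.

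Where your sketch diverges is in the hard step of $\supseteq$. You propose to show \emph{symmetrically} that both $\eta$ and $\theta$ correspond to $\lambda$-B\'ezout couples, reading this off from $\eta^-\le\tau^-_{\bs\lambda_i}$ and $\theta^+\le\tau^+_{\bs\lambda_i}$. The paper does \emph{not} proceed this way, and your symmetric plan is unlikely to go through as stated: the constraint $\theta^+\le\tau^+_{\bs\lambda_i}$ does not by itself control the sign of the second coordinate of $\theta$, which (via Lemma~\ref{secondcoordinate}) is what distinguishes a $\lambda$-couple from a $\mu$-couple. The paper's route is deliberately asymmetric. First, a purely arithmetic helper (Lemma~\ref{technicallemma}) bounds what can happen when $(x_{m1},x_{m3})=(a_1,a_3)+(b_1,b_3)$ with both summands of positive length below $m$. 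Then Lemma~\ref{mainlemma} uses $z_2\ge0$ (the second coordinate of $\mathbf z-\tau^-$ is exactly $z_2$) together with that helper to force $(a_1,a_3)$ to be a $\lambda$-B\'ezout couple; from this it follows that $(b_1,b_3)$ is \emph{some} B\'ezout couple. Irreducibility of $\bs\lambda_m$ then rules out the $\lambda$-case for $(b_1,b_3)$, so it must be a $\mu$-couple; but applying Corollary~\ref{performance} to that $\mu$-couple yields $\tau^+_{m1}-z_1>0$ (if $\delta_1<\delta_3$) or $\tau^+_{m3}-z_3>0$ (if $\delta_3<\delta_1$), forcing $z_1<0$ or $z_3<0$ and contradicting $\mathbf z\in\mathbb N^3$. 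In other words, the paper never shows directly that $\theta$ is a $\lambda$-vector; it shows the opposite assumption kills $\mathbf z$.

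One minor point: your worry about ``trivial length-bearing vectors such as $(n_3,0,-n_1)/\gcd(n_1,n_3)$'' is a red herring. Since $M$ is generated by $\mathbf v_1,\mathbf v_3$ (Proposition~\ref{fact0}), any such vector is already an integer combination of these, so the only genuine issue is bounding $|a_1|,|a_3|,|b_1|,|b_3|$---which is exactly what Lemma~\ref{technicallemma} is for.
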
 

\begin{proof}
For sake of simplicity assume that $g=1$.

\noindent $\subseteq$. Let $m \in \Delta(S)$. It is obvious that $1 \le m \le \max\{\delta_1,\delta_3\}$.

Assume that $\bs\lambda_m  \not \in I^{(\bs\lambda)}_{\delta_1,\delta_3}$ and $\bs\mu_m \not \in I^{(\bs\mu)}_{\delta_1,\delta_3}$. Since $m \in \Delta(S)$ there exists $s \in S$, $\mathbf x=(x_1,x_2,x_3),\mathbf y=(y_1,y_2,y_3) \in \mathsf Z(s)$ such that $|\mathbf y|-|\mathbf x|=m$ and there is no $\mathbf z \in \mathsf Z(s)$ such that $|\mathbf x|<|\mathbf z| < |\mathbf y|$. By Proposition \ref{fact0} we have
\[
\mathbf y-\mathbf x=a_1 (c_1,-r_{12},-r_{13})+a_3 (r_{31},r_{32},-c_3)=a_1\mathbf v_1+a_3\mathbf v_3
\]
for some $a_1,a_3 \in \mathbb{Z}$. By taking lengths, we have $a_1\delta_1 + a_3\delta_3=m$. As $1\le m\le \max\{\delta_1,\delta_3\}$, we  deduce that $a_1a_3<0$. Also, $(a_1+q\delta_3)\delta_1+(a_3-q\delta_1)\delta_3=m$ for all $q\in \mathbb Z$. 

For sake of simplicity, write $\mathbf v=-\delta_3\mathbf v_1+\delta_1\mathbf v_3=(v_1,v_2,v_3)$.  As in Lemmas \ref{secondcoordinate} and \ref{twisted}, we can deduce:

\begin{itemize}
\item $v_2=\delta_3r_{12}+\delta_1r_{32}>0$.
\item $v_1=-\delta_3c_1+\delta_1r_{31}$; thus  if $\delta_3>\delta_1$, then $v_1<0$.
\item $v_3=\delta_3r_{13}-\delta_1c_3$; whence if $\delta_3<\delta_1$, then $v_3<0$.
\end{itemize}

\begin{enumerate}[(a)]
\item  $a_1 \le 0$, then $a_3>0$ and notice that we can take $q\in \mathbb N$ such that $(a_1+q\delta_3,a_3-q\delta_1)$ is a $\lambda$-B\'ezout couple. Write
\[
\mathbf y-\mathbf x= (a_1+q\delta_3)\mathbf v_1+(a_3-q\delta_1) \mathbf v_3
 -q\delta_3\mathbf v_1+q\delta_1\mathbf v_3 
=\tau_{\bs\lambda_m}+q\mathbf v.
\]
We going to prove that either $\mathbf y-\tau_{\bs\lambda_m}$ or $\mathbf x+\tau_{\bs\lambda_m}$ are in $\mathsf Z(s)$. From Lemma \ref{fact}, it suffices to show that either $\mathbf y>\tau_{\bs\lambda_m}^+$ or $\mathbf x>\tau_{\bs\lambda_m}^-$ (observe that $(-\tau_{\bs \lambda_m})^-=\tau_{\bs\lambda_m}^+$).  We have, from Corollary \ref{performance} and the above remark on $(v_1,v_2,v_3)$ that $\tau_{m2}>0$, $y_2-x_2>0$ and $v_2> 0$. Also $y_2-x_2=qv_2+\tau_{m2}$ with $q\ge 0$. So we can deduce $y_2>qv_2+\tau_{m2}\ge \tau_{m2}>0$. Now depending on $\delta_1<\delta_3$ or $\delta_3<\delta_1$, we can assure, again from Corollary \ref{performance} and the above remark that for $i=1$ or $i=3$ we have $\tau_{mi}<0$, $y_i-x_i<0$ and $v_i<0$. Hence $-x_i<y_i-x_i=qv_i+\tau_{mi}<\tau_{mi}<0$.

We have in this case ($a_1\le 0$) that $0<\tau_{m2}<y_2$ and $x_i<\tau_{mi}<0$. Take $j$ such that $\{i,j\}=\{1,3\}$. Now, if $\tau_{mj}\le 0$, we have $\tau_{\bs\lambda_m}^+=(0,\tau_{m2},0)<\mathbf y$, and if $\tau_{mj}>0$, then $\tau_{\bs\lambda_m}^-=-\tau_{mi}\bs e_i<(x_1,x_2,x_3)=\mathbf x$; where $\bs e_i$ is the $i$th row of the $3\times 3$ identity matrix.
\begin{itemize}
\item If $\mathbf y>\tau_{\bs\lambda_m}^+$, by Lemma \ref{fact} we have $\mathbf y-\tau_{\bs\lambda_m}\in \mathsf Z(s)$. As $\bs \lambda_m$ is reducible, by Lemma \ref{fact3}, there exists $j \in \mathbb{Z}^+,j<m$ such that $\tau^+_{\bs\lambda_m} \ge \tau^+_{\bs \lambda_j}=(-\tau_{\bs \lambda_j})^-$. As $\mathbf y>\tau_{\bs\lambda_m}^+$, in light of Lemma \ref{fact}, we have that
$\mathbf z=\mathbf y-\tau_{\bs \lambda_j}\in\mathsf Z(s)$.

\item If $\tau_{\bs\lambda_m}^-<\mathbf x$, again by Lemma \ref{fact} we deduce $\mathbf x+\tau_{\bs \lambda_j}\in\mathsf Z(s)$. By Lemmas \ref{fact2} and \ref{fact}, we derive $\mathbf z=\mathbf x+\tau_{\bs \lambda_j}\in\mathsf Z(s)$.
\end{itemize}
In both cases $|\mathbf x|<|\mathbf z|<|\mathbf y|$, which is a contradiction.

\item $a_1 > 0$. This case is identical, considering now $q$ a nonpositive integer such that $(a_1+q\delta_3,a_3-q\delta_1)$ is a $\mu$-B\'ezout couple. 
\end{enumerate}

\noindent $\supseteq$. Let $m \in \{i \in\mathbb N \mid 1 \le i \le \max\{\delta_1,\delta_3\}, \bs x_i \in I_{\delta_1,\delta_3}\}$. Thus we have $\bs x_m\in I_{\delta_1,\delta_3}$. Assume to the contrary that $m \not \in \Delta(S)$.

We know that $\tau_{\bs x_m}^-$ and $\tau_{\bs x_m}^+$ are factorizations for some $s\in S$ and, from Remark \ref{mainremark}, we have $|\tau_{\bs x_m}|=|\tau_{\bs x_m}^+|-|\tau_{\bs x_m}^-|=m$. Hence $|\tau_{\bs x_m}^+|=|\tau_{\bs x_m}^-|+m>|\tau_{\bs x_m}^-|$. Since $m \not \in \Delta(S)$, there exists some $\mathbf z \in \mathsf Z(s)$ such that  $|\tau_{\bs x_m}^-| < |\mathbf z|<| \tau^+_{\bs x_m}|$. 
By Proposition \ref{fact0}, we know that there exists $(a_1,a_3),(b_1,b_3)\in \mathbb Z^2$ such that
\begin{eqnarray*}
\mathbf z-\tau_{\bs x_m}^-=a_1\mathbf v_1 +a_3\mathbf v_3, \quad 
\tau_{\bs x_m}^+-\mathbf z=b_1\mathbf v_1+b_3\mathbf v_3,
\end{eqnarray*} 
and consequently
\[
0<a_1\delta_1 +a_3\delta_3<m \hbox{ and }  0<b_1\delta_1 +b_3\delta_3<m.
\]
Observe that $\mathbf z-\tau_{\bs x_m}^-, \tau_{\bs x_m}^+-\mathbf z\not\in\mathbb N^3$, since any two factorizations of the same element are incomparable.

Notice that, since $\{\bs v_1,\bs v_3\}$ is a basis of $M$,  
\begin{equation}\label{xab}
\bs x_m=(x_{m1},x_{m3})=(a_1,a_3)+(b_1,b_3).
\end{equation} 

We will prove in Lemma \ref{mainlemma} that both $(a_1,a_3)$ and $(b_1,b_3)$ are B\'ezout couples. Moreover, this lemma states that if $\bs x_m$ is a $\lambda$-B\'ezout couple, then $(a_1,a_3)$ is a $\lambda$-B\'ezout couple and $(b_1,b_3)$ is a B\'ezout couple. And, if $\bs x_m$ is a $\mu$-B\'ezout couple, then $(b_1,b_3)$ is a $\mu$-B\'ezout couple and $(a_1,a_3)$ is a B\'ezout couple.

Assume, then, that  $\bs x_m=\bs\lambda_m$.  It follows from the above mentioned Lemma \ref{mainlemma}, that $(a_1,a_3)$ is a $\lambda$-B\'ezout couple and $(b_1,b_3)$ is a B\'ezout couple. It is clear that $(b_1,b_3)$ can not be a $\lambda$-B\'ezout couple because $\bs x_m=\bs \lambda_m$ is irreducible. Hence in this setting $(b_1,b_3)$ is a $\mu$-B\'ezout couple. As $\bs x_m=\bs\lambda_m$, from Lemma \ref{secondcoordinate} we have $\tau_{m2}>0$, so if we write $\mathbf z=(z_1,z_2,z_3)$, we have
\[
\begin{array}{l}
\tau_{\bs\lambda_m}=(\tau_{m1},\tau_{m2},\tau_{m3}), \\ \mathbf z-\tau_{\bs \lambda_m}^-=(z_1-\tau_{m1}^-,z_2,z_3-\tau_{m3}^-), \\ \tau_{\bs \lambda_m}^+-\mathbf z=(\tau_{m1}^+-z_1,\tau_{m2}-z_2,\tau_{m3}^+-z_3).
\end{array}
\]

It follows that
\[
(\tau_{m1},\tau_{m2},\tau_{m3})=(z_1-\tau_{m1}^-,z_2,z_3-\tau_{m3}^-)+(\tau_{m1}^+-z_1,\tau_{m2}-z_2,\tau_{m3}^+-z_3).
\]
If we apply Corollary \ref{performance} to $(a_1,a_3)$ and $(b_1,b_3)$, we deduce the following.
\begin{enumerate}[(1)]
 \item If $\delta_1<\delta_3$, then $\tau_{m1}<0$ and  $\tau_{m1}^+-z_1>0$, so we have $z_1<\tau_{m1}^+=0$, contradicting that $\mathbf z\in \mathsf Z(s)\subseteq \mathbb N^3$.
 \item If $\delta_3<\delta_1$, then $\tau_{m3}<0$ and $\tau_{m3}^+-z_3>0$, so we have $z_3<\tau_{m3}^+=0$, which yields again a contradiction.   
\end{enumerate}

The case $\bs x_m=\bs\mu_m$ is analogous.
\end{proof}

In order to make the proof of Theorem \ref{main}, we have extracted Lemma \ref{mainlemma} from it. We need an extra lemma to prove this piece.

\begin{lemma}\label{technicallemma}
Let $m\le \max\{\delta_1,\delta_3\}$. Let $(x_{m1},x_{m3})$ be a B\'ezout couple such that $(x_{m1},x_{m3})=(a_1,a_3)+(b_1,b_3)$ with $0<a_1\delta_1+a_3\delta_3<m$ and $0<b_1\delta_1+b_3\delta_3<m$. 
\begin{enumerate}[(1)]
 \item If $a_3\le -\delta_1$, then $a_1>\delta_3$. Moreover, if $\delta_3>\delta_1$, the converse is also true.
 \item If $a_1\le -\delta_3$, then $a_3>\delta_1$. Moreover, if $\delta_1>\delta_3$, the converse holds.
 \item If ${\bs x_m}=\bs \lambda_m$, we have that $a_3\le -\delta_1$ implies $\delta_1< b_3$; while $\delta_1< b_3$ implies $a_3<0$. 
 \item If ${\bs x_m}=\bs \lambda_m$, then the inequality $a_1> \delta_3$ implies $b_1 < -\delta_3$; while $b_1 \le -\delta_3$ implies $0<a_1$.
 \item If ${\bs x_m}=\bs \mu_m$, then $a_1\le -\delta_3$ implies $\delta_3 < b_1$; while $\delta_3 < b_1$ implies $a_1<0$.
 \item If ${\bs x_m}=\bs \mu_m$, we have that $a_3 > \delta_1$ implies $b_3 < -\delta_1$; and $b_3 \le -\delta_1$ implies $0<a_3$.
\end{enumerate}
Clearly, the above statements are true if we swap $a_i$ and $b_i$.
\end{lemma}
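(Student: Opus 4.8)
The plan is to reduce everything to elementary arithmetic with the two B\'ezout equalities. Write $i := a_1\delta_1 + a_3\delta_3$ and $j := b_1\delta_1 + b_3\delta_3$; the hypotheses then say $i,j\ge 1$ and $i+j=m\le\max\{\delta_1,\delta_3\}$, so in particular $0<i<\max\{\delta_1,\delta_3\}$ and $0<j<\max\{\delta_1,\delta_3\}$ (since $i=m-j\le m-1<m$, and similarly for $j$). The other inputs are just the coordinate identities $x_{m1}=a_1+b_1$ and $x_{m3}=a_3+b_3$, which are precisely the hypothesis $(x_{m1},x_{m3})=(a_1,a_3)+(b_1,b_3)$, together with the sign and size information for $\bs x_m$ coming from Definition \ref{couples} and Lemma \ref{negativo-l1}: if $\bs x_m=\bs\lambda_m$ then $0<x_{m3}\le\delta_1$ and $-\delta_3<x_{m1}\le 0$, while if $\bs x_m=\bs\mu_m$ then $0<x_{m1}\le\delta_3$ and $-\delta_1<x_{m3}\le 0$.

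For (1): if $a_3\le -\delta_1$ then $a_1\delta_1=i-a_3\delta_3\ge i+\delta_1\delta_3>\delta_1\delta_3$ using $i>0$, hence $a_1>\delta_3$. For the converse under the extra assumption $\delta_3>\delta_1$: from $a_1>\delta_3$ we get $a_3\delta_3=i-a_1\delta_1<i-\delta_1\delta_3\le\delta_3-\delta_1\delta_3=\delta_3(1-\delta_1)$, where the last inequality uses $i<m\le\max\{\delta_1,\delta_3\}=\delta_3$; dividing by $\delta_3$ gives $a_3<1-\delta_1$, i.e. $a_3\le -\delta_1$ since $a_3\in\mathbb Z$. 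Part (2) is obtained from (1) by interchanging the roles of the indices $1$ and $3$ (equivalently, of $\delta_1$ and $\delta_3$), so the same two lines prove it.

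For (3)--(6) one writes the relevant coordinate of $\bs x_m$ as a sum and substitutes the appropriate interval. For (3), where $\bs x_m=\bs\lambda_m$ and hence $0<x_{m3}\le\delta_1$: if $a_3\le -\delta_1$ then $b_3=x_{m3}-a_3\ge x_{m3}+\delta_1>\delta_1$, while if $\delta_1<b_3$ then $a_3=x_{m3}-b_3<x_{m3}-\delta_1\le 0$. The other three items are identical with the bookkeeping changed: (4) uses $-\delta_3<x_{m1}\le 0$ (still for $\bs\lambda_m$), (5) uses $0<x_{m1}\le\delta_3$ (for $\bs\mu_m$), and (6) uses $-\delta_1<x_{m3}\le 0$ (for $\bs\mu_m$); in each case the ``positive'' and ``negative'' endpoints of the interval produce, respectively, the two implications claimed. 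The closing remark that one may swap $a_i$ and $b_i$ is immediate, since both the hypotheses on $(a_1,a_3)$ and $(b_1,b_3)$ and the identities $x_{m1}=a_1+b_1$, $x_{m3}=a_3+b_3$ are symmetric in the two couples.

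There is no genuine obstacle here; the only point requiring care is matching each implication to the correct input — the strict positivity $i>0$ versus the bound $i<\max\{\delta_1,\delta_3\}$ in (1)--(2), and the correct sign/size interval for the appropriate coordinate of $\bs\lambda_m$ or $\bs\mu_m$ in (3)--(6). I would present the six items in turn, each time pointing out which inequality is being used, so that the routine nature of the computation is transparent.
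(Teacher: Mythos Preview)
Your proof is correct and follows essentially the same route as the paper: the forward implications in (1)--(2) come from the strict positivity $a_1\delta_1+a_3\delta_3>0$, the converses from the upper bound $a_1\delta_1+a_3\delta_3<m\le\max\{\delta_1,\delta_3\}$, and (3)--(6) follow by inserting the interval constraints on $x_{m1},x_{m3}$ from Definition~\ref{couples} and Lemma~\ref{negativo-l1} into $x_{mk}=a_k+b_k$. The only cosmetic difference is that you package the argument via the shorthand $i,j$ and state the symmetry reductions explicitly, whereas the paper writes out each inequality chain directly.
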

\begin{proof}
\begin{enumerate}[(1)]
\item As $0<a_1\delta_1+a_3\delta_3$, if $a_3\le -\delta_1$,  then $0<a_1\delta_1+a_3\delta_3\le a_1\delta_1-\delta_1\delta_3=(a_1-\delta_3)\delta_1$. Hence $0<(a_1-\delta_3)\delta_1$ and as $\delta_1>0$, we deduce that $a_1>\delta_3$.  

Now assume that $\delta_3>\delta_1$. Since $a_1\delta_1+a_3\delta_3<m\le \max\{\delta_1,\delta_3\}$, if $a_1>\delta_3$, then $m\ge a_1\delta_1+a_3\delta_3>\delta_3\delta_1+a_3\delta_3=(\delta_1+a_3)\delta_3$.
We have $\delta_3\ge m>(\delta_1+a_3)\delta_3$, whence $1>\delta_1+a_3$, or equivalently, $0\geq \delta_1+a_3$ and so $a_3\le -\delta_1$.
 
\item This case is analogous. 

\item Remember that ${\bs x_m}=\bs \lambda_m$ implies $0< x_{m3}=a_3+b_3\le \delta_1$. So, if $a_3\le-\delta_1$, we have
$0< a_3+b_3\le -\delta_1+b_3$, and then $\delta_1< b_3$. If $\delta_1< b_3$, we obtain $a_3+\delta_1< a_3+b_3\le\delta_1$, and then $a_3<0$.
 
\item If ${\bs x_m}=\bs \lambda_m$, we have too that $-\delta_3<x_{m1}=a_1+b_1\le 0$ (Lemma \ref{negativo-l1}). If $a_1 > \delta_3$, then $0\ge a_1+b_1 > \delta_3+b_1$, and so $b_1 < -\delta_3$; while if $b_1 \le -\delta_3$, we have $-\delta_3< a_1+b_1 \le a_1-\delta_3$, and then $0< a_1$.
   
\item This case is similar as case (3)

\item The proof is analogous to case (4).\qedhere
\end{enumerate}
\end{proof}

Now, we are ready to proof the necessary result to finish the Theorem \ref{main}.  
\begin{lemma}\label{mainlemma}
 Consider, as in the proof of Theorem \ref{main} that
 \[
 \mathbf z-\tau_{\bs x_m}^-=a_1\mathbf v_1+a_3\mathbf v_3 \mbox{ and } \tau_{\bs x_m}^+-\mathbf z=b_1\mathbf v_1+b_3\mathbf v_3,
 \]
 with $\mathbf z=(z_1,z_2,z_3)\in\mathbb N^3$.
 \begin{enumerate}[(1)]
 \item If $(x_{m1},x_{m3})$ and $(a_1,a_3)$ are both $\lambda$-B\'ezout couples, then  $(b_1,b_3)$ is a B\'ezout couple. Similarly, if $(x_{m1},x_{m3})$ and $(b_1,b_3)$ are both $\mu$-B\'ezout couples, then $(a_1,a_3)$ is a B\'ezout couple.
 \item If ${\bs x_m}=\bs \lambda_m$, then $(a_1,a_3)$ is a $\lambda$-B\'ezout couple, and if ${\bs x_m}=\bs \mu_m$, then $(b_1,b_3)$ is a $\mu$-B\'ezout couple.
 \end{enumerate} 
\end{lemma}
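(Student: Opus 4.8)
The plan is to analyze the sign structure of the pairs $(a_1,a_3)$ and $(b_1,b_3)$ subject to the two constraints we already have: the length constraints $0 < a_1\delta_1 + a_3\delta_3 < m$ and $0 < b_1\delta_1 + b_3\delta_3 < m$ coming from $\mathbf z$ lying strictly between $\tau_{\bs x_m}^-$ and $\tau_{\bs x_m}^+$ in length, together with the fact that $\mathbf z - \tau_{\bs x_m}^-$ and $\tau_{\bs x_m}^+ - \mathbf z$ each have at least one strictly negative coordinate (they are differences of distinct factorizations of the same element, hence not in $\mathbb N^3$). First I would translate ``not in $\mathbb N^3$'' into inequalities on the $\tau$-coordinates of $a_1\mathbf v_1 + a_3\mathbf v_3$ and $b_1\mathbf v_1 + b_3\mathbf v_3$ via Definition \ref{tau}, and then invoke Corollary \ref{performance} to read off that the sign of the relevant negative coordinate is governed by the sign of $a_1$ (resp.\ $b_1$). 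Concretely: to say $(a_1,a_3)$ is \emph{not} a $\lambda$-B\'ezout couple means $\lambda_{i3} \notin (0,\delta_1]$ for its index $i = a_1\delta_1 + a_3\delta_3$; by the uniqueness in Proposition/Definition \ref{couples} this pins down exactly how $a_3$ must fail, and similarly for the $\mu$-condition on $a_1$.

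For part (2), suppose $\bs x_m = \bs\lambda_m$, so $0 < x_{m3} \le \delta_1$ and, by Lemma \ref{negativo-l1}, $-\delta_3 < x_{m1} \le 0$. I would argue by contradiction: if $(a_1,a_3)$ is not a $\lambda$-B\'ezout couple, then either $a_3 \le -\delta_1$ or $a_3 > \delta_1$ (it cannot be $0$ by the argument in Lemma \ref{fact2}). Case $a_3 \le -\delta_1$: by Lemma \ref{technicallemma}(1) we get $a_1 > \delta_3$, and by \ref{technicallemma}(3) we get $\delta_1 < b_3$, hence by the swapped version of \ref{technicallemma}(1) applied to $(b_1,b_3)$ — actually by \ref{technicallemma}(4) — we get $b_1 < -\delta_3$; combining $a_1 > \delta_3$ with $b_1 < -\delta_3$ and $x_{m1} = a_1 + b_1$ is consistent, so I need to push further and use Corollary \ref{performance} on $(a_1,a_3)$ (note $a_1 > 0$) to locate a coordinate of $a_1\mathbf v_1 + a_3\mathbf v_3$ forcing a coordinate of $\mathbf z$ or of $\tau_{\bs\lambda_m}^-$ out of $\mathbb N$, exactly as in the endgame of Theorem \ref{main}'s $\supseteq$ direction. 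Case $a_3 > \delta_1$: here $x_{m3} = a_3 + b_3 \le \delta_1$ forces $b_3 < 0$ cleanly, and a symmetric sign-chase with Corollary \ref{performance} produces the contradiction. The $\mu$-case is the mirror image, swapping the roles of the first and third coordinates and of $\delta_1,\delta_3$.

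For part (1), assume $(x_{m1},x_{m3})$ and $(a_1,a_3)$ are both $\lambda$-B\'ezout couples; I want $(b_1,b_3)$ to be \emph{some} B\'ezout couple, i.e.\ either $0 < b_3 \le \delta_1$ or $0 < b_1 \le \delta_3$. From $x_{m3} = a_3 + b_3$ with $0 < x_{m3} \le \delta_1$ and $0 < a_3 \le \delta_1$ one immediately gets $-\delta_1 < b_3 < \delta_1$; if $b_3 > 0$ we are done, and if $b_3 \le 0$ I would show $0 < b_1 \le \delta_3$ using $x_{m1} = a_1 + b_1$ with $-\delta_3 < x_{m1} \le 0$, $-\delta_3 < a_1 \le 0$ (Lemma \ref{negativo-l1}), giving $-\delta_3 < b_1 < \delta_3$, and then the length bound $0 < b_1\delta_1 + b_3\delta_3$ together with $b_3 \le 0$ rules out $b_1 \le 0$, so $0 < b_1 < \delta_3$. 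The main obstacle I anticipate is the first case of part (2) (the $a_3 \le -\delta_1$ branch), where the naive sign bookkeeping via Lemma \ref{technicallemma} alone does not close the gap and one must combine it with Corollary \ref{performance} applied with $a_1 > 0$ to identify precisely which coordinate of $\mathbf z$, $\tau_{\bs\lambda_m}^-$, or $\tau_{\bs\lambda_m}^+$ becomes negative — this is the same delicate coordinate-tracking that makes the $\supseteq$ half of Theorem \ref{main} work, and keeping the $\delta_1 < \delta_3$ versus $\delta_3 < \delta_1$ subcases straight throughout is where most of the care is needed.
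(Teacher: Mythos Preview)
Your treatment of part (1) is correct and essentially matches the paper's argument: the paper phrases the conclusion as $-\delta_3 < b_1 < \delta_3$, $-\delta_1 < b_3 < \delta_1$ together with $b_1 b_3 \le 0$, but the content is identical to your case split on the sign of $b_3$.

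Part (2), however, has a genuine gap in the case analysis. You write that if $(a_1,a_3)$ is not a $\lambda$-B\'ezout couple then ``either $a_3 \le -\delta_1$ or $a_3 > \delta_1$'', but this is false: failing the $\lambda$-condition only means $a_3 \notin (0,\delta_1]$, so the entire range $-\delta_1 < a_3 \le 0$ --- precisely the $\mu$-B\'ezout range --- is left unaddressed. Your parenthetical appeal to Lemma~\ref{fact2} for ``$a_3 \ne 0$'' does not help: the argument there concerns the actual $\lambda$-couple $\bs\lambda_i$, not an arbitrary solution of $a_1\delta_1 + a_3\delta_3 = i$. As written, your contradiction argument would at best establish that $(a_1,a_3)$ is \emph{some} B\'ezout couple, not that it is the $\lambda$-couple.

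The paper closes this gap with an observation you never isolate: since $\bs x_m = \bs\lambda_m$, Lemma~\ref{secondcoordinate} gives $\tau_{m2} > 0$, hence $\tau_{m2}^- = 0$, and therefore the second coordinate of $\mathbf z - \tau_{\bs x_m}^- = a_1\mathbf v_1 + a_3\mathbf v_3$ is exactly $z_2$, i.e.\ $-a_1 r_{12} + a_3 r_{32} = z_2 \ge 0$. This single equation does double duty. First, in the case $a_3 \le -\delta_1$ (where Lemma~\ref{technicallemma}(1) gives $a_1 > \delta_3 > 0$), it immediately yields $z_2 < 0$, a contradiction --- much more direct than your proposed detour through Lemma~\ref{technicallemma}(3),(4). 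Second, and this is the missing step, once $-\delta_1 < a_3 \le \delta_1$ is established (so $(a_1,a_3)$ is \emph{a} B\'ezout couple), Lemma~\ref{secondcoordinate} applied to $(a_1,a_3)$ converts $z_2 > 0$ into $a_1 \le 0$, which is exactly what forces the couple to be $\lambda$ rather than $\mu$. The more elaborate coordinate-chasing you sketch (via Lemma~\ref{technicallemma} and the third-coordinate equation for $-z_3$) is in fact what the paper uses, but only for the remaining case $a_3 > \delta_1$.
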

\begin{proof}
 \begin{enumerate}[(1)]
  \item If both are $\lambda$-B\'ezout couples, we have by definition and Lemma \ref{negativo-l1}: 
  $-\delta_3  <       x_{m1}     \le    0$,                $      0      <       x_{m3}        \le    \delta_1$,  
  $     0    \le     -a_1          <    \delta_3$, and     $ -\delta_1  \le       -a_3        <   0$. Hence 
  $ -\delta_3   <   x_{m1}-a_1=b_1   <    \delta_3$ and     $ -\delta_1   <    x_{m3}-a_3=b_3    <    \delta_1$. 

  As $0<m= b_1\delta_1+b_3\delta_3\le \max\{\delta_1,\delta_3\}$, we deduce $b_1b_3\le 0$.
  This proves that $(b_1,b_3)$ is a B\'ezout couple.
   
  If both are $\mu$-B\'ezout couples, the proof is similar.

 \item Recall that by Lemma \ref{secondcoordinate} that, ${\bs x_m}=\bs \lambda_m$ if and only if $\tau_{m2}>0$. Hence $\mathbf z-\tau_{\bs x_m}^-=(z_1-\tau^-_{m1},z_2,z_3-\tau^+_{m3})$. So we have $-a_1r_{12}+a_3r_{32} = z_2 >0$. We distinguish two cases.
\begin{itemize}
\item $\delta_1>\delta_3$. We prove that $-\delta_1 < a_3 \le \delta_1$. Suppose to the contrary that  
   \begin{itemize}
   \item[(i)]   $a_3\le-\delta_1$, from Lemma \ref{technicallemma} (1) we have $a_1>\delta_3$ and so $z_2<0$, which is a contradiction;   or 
   
   \item[(ii)] $a_3 >\delta_1$, from Lemma \ref{technicallemma}
        \begin{itemize}
         \item[(2)] (sufficient condition) implies $a_1\le -\delta_3$,
         \item[(3)] (swapping $a$ and $b$) yields $b_3<0$, 
         \item[(4)] (swapping $a$ and $b$) forces $0<b_1$. 
        \end{itemize}
       As $\delta_1>\delta_3$, from Corollary \ref{performance}, $\tau_{m3}<0$. Then $\tau_{\bs x_m}^+-\mathbf z=(\tau^+_{m1}-z_1,\tau^+_{m2}-z_2,-z_3)$. So we have $-b_1r_{13}-b_3c_3 = -z_3<0$, and, as we are assuming $\delta_1>\delta_3$, we have $b_1\delta_1+b_3\delta_3<m\le \delta_1$. This implies $ (b_1-1)\delta_1<-b_3\delta_3<-b_3\delta_1$ and then $b_1-1<-b_3$, or equivalently, $b_1\le -b_3$. Since $c_3>r_{13}$, it follows that $b_1r_{13}+b_3c_3<0$ obtaining again a contradiction. 
   \end{itemize}
   
The case $\delta_3>\delta_1$ is analogous.
\end{itemize}
So, in this case $(a_1,a_3)$ is a B\'ezout couple. And as, $z_2>0$ for Lemma \ref{secondcoordinate} $(a_1,a_3)$ is a $\lambda$-B\'ezout couple. And from Lemma \ref{mainlemma} (1), we assure that $(b_1,b_3)$ is too a B\'ezout couple.

The case ${\bs x_m}=\bs \mu_m$ is completely similar as $\bs x_m=\bs \lambda_m$ case.\qedhere
\end{enumerate}
\end{proof}

\begin{example}
Take $S=\langle 8,41,79 \rangle$. We have $\delta_1=15-1-1=13$ and $\delta_3=4+5-3=6$. Following Theorem \ref{main}, we can compute a \emph{B\'ezout Table} for this couple.
\begin{table}[h]
\begin{center}
\begin{tabular}{|l|c|c||c||c|c|r|}
\hline 
Irr. & $\lambda_{i1}$ & $\lambda_{i3}$ & $i$ & $\mu_{i1}$ & $\mu_{i3}$ & Irr. \\
\hline \hline
$\checkmark$ & -5 & 11 & 1 & 1 & -2 & $\checkmark$ \\
\hline
$\checkmark$ & -4 & 9 & 2 & 2 & -4 & $\times$ \\
\hline
$\checkmark$ & -3 & 7 & 3 & 3 & -6 & $\times$ \\
\hline
$\checkmark$ & -2 & 5 & 4 & 4 & -8 & $\times$ \\
\hline
$\checkmark$ & -1 & 3 & 5 & 5 & -10 & $\times$ \\
\hline
$\checkmark$ & 0 & 1 & 6 & 6 & -12 & $\times$ \\
\hline
$\times$ & -5 & 12 & 7 & 1 & -1 & $\checkmark$ \\
\hline
$\times$ & -4 & 10 & 8 & 2 & -3 & $\times$ \\
\hline
$\times$ & -3 & 8 & 9 & 3 & -5 & $\times$ \\
\hline
$\times$ & -2 & 6 & 10 & 4 & -7 & $\times$ \\
\hline
$\times$ & -1 & 4 & 11 & 5 & -9 & $\times$ \\
\hline
$\times$ & 0 & 2 & 12 & 6 & -11 & $\times$ \\
\hline
$\times$ & -5 & 13 & 13 & 1 & 0 & $\checkmark$ \\
\hline
\end{tabular}
\end{center}
\caption{B\'ezout Table for $\delta_1=13$ and $\delta_3=6$,}
\end{table}

Thus, $\Delta(S)=\{1,2,3,4,5,6,7,13\}$.
\end{example}

Let $d=\max\{\delta_1,\delta_3\}$. Notice that this procedure has at least $d\log(d)$ complexity, and requires the precomputation of $\delta_1$ and $\delta_3$ . We will try to improve this in the next section. However, we can get some interesting theoretical consequences out of this (which was the initial motivation to write this manuscript).  By using Theorem \ref{main} we can prove two conjectures proposed by Malyshev \cite{BB}. Some partial solutions were provided in \cite{k-deltas}.

\begin{corollary}
Let $S$ be a nonsymmetric numerical semigroup with embedding dimension three and $|\Delta(S)| > 1$. If $1=\min \Delta(S)$, then $\{2,3\} \subseteq \Delta(S)$.
\end{corollary}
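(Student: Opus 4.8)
The plan is to read the conclusion straight off the explicit description of $\Delta(S)$ in Theorem~\ref{main}. First I would translate the hypotheses. Since $\min\Delta(S)=1$, Lemma~\ref{minmax} gives $g:=\gcd(\delta_1,\delta_3)=1$, so Theorem~\ref{main} applies with no normalization; and since $|\Delta(S)|>1$ we cannot have $\delta_1=\delta_3$ (that would force $\Delta(S)=\{\delta_1\}$ to be a singleton), so $\delta_1\ne\delta_3$ and hence $\max\{\delta_1,\delta_3\}\ge 2$. By Theorem~\ref{main}, for $1\le i\le\max\{\delta_1,\delta_3\}$ one has $i\in\Delta(S)$ exactly when $\bs\lambda_i$ or $\bs\mu_i$ is irreducible in the sense of Definition~\ref{irr}. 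So the task reduces to: (a) showing $3\le\max\{\delta_1,\delta_3\}$, and (b) showing that for each $i\in\{2,3\}$ at least one of $\bs\lambda_i$, $\bs\mu_i$ is irreducible.

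For (b) with $i=2$ I would argue by contradiction. If both $\bs\lambda_2$ and $\bs\mu_2$ were reducible, then by Lemma~\ref{romper} the only admissible splitting of the index is $2=1+1$, forcing $\bs\lambda_2=2\bs\lambda_1$ and $\bs\mu_2=2\bs\mu_1$. Combining $\bs\lambda_2=2\bs\lambda_1$ with the defining constraint $0<\lambda_{23}\le\delta_1$ of Definition~\ref{couples} gives $2\lambda_{13}\le\delta_1$; combining $\bs\mu_2=2\bs\mu_1$ with $0<\mu_{21}\le\delta_3$ gives $2\mu_{11}\le\delta_3$. Now Lemma~\ref{negativo-l1}(2) gives $\mu_{11}=\lambda_{11}+\delta_3$, so the second inequality becomes $\lambda_{11}\le-\delta_3/2$; substituting this into $\lambda_{11}\delta_1+\lambda_{13}\delta_3=1$ forces $\lambda_{13}>\delta_1/2$, contradicting $2\lambda_{13}\le\delta_1$. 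Hence one of $\bs\lambda_2,\bs\mu_2$ is irreducible and, as $\max\{\delta_1,\delta_3\}\ge 2$, we get $2\in\Delta(S)$.

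For (b) with $i=3$ the identical template works with the splitting $3=1+2$: if $\bs\lambda_3,\bs\mu_3$ were both reducible then $\bs\lambda_3=\bs\lambda_1+\bs\lambda_2$ and $\bs\mu_3=\bs\mu_1+\bs\mu_2$, whence $0<\lambda_{33}\le\delta_1$ and $0<\mu_{31}\le\delta_3$ give $\lambda_{13}+\lambda_{23}\le\delta_1$ and $\mu_{11}+\mu_{21}\le\delta_3$. Using $\mu_{i1}=\lambda_{i1}+\delta_3$ for $i=1,2$, the latter becomes $\lambda_{11}+\lambda_{21}\le-\delta_3$; adding the B\'ezout identities $\lambda_{11}\delta_1+\lambda_{13}\delta_3=1$ and $\lambda_{21}\delta_1+\lambda_{23}\delta_3=2$ and substituting then yields $\lambda_{13}+\lambda_{23}>\delta_1$, the required contradiction. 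So one of $\bs\lambda_3,\bs\mu_3$ is irreducible, and hence $3\in\Delta(S)$ provided $3$ is in range.

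The step I expect to be the genuine obstacle is (a): one needs $3\le\max\{\delta_1,\delta_3\}$, i.e. $\max\Delta(S)\ge 3$, and this does \emph{not} follow from $|\Delta(S)|>1$ and $\min\Delta(S)=1$ on their own --- for $S=\langle 4,7,9\rangle$ one computes $\delta_1=2$, $\delta_3=1$, hence $\Delta(S)=\{1,2\}$, which satisfies the displayed hypotheses but not the conclusion. I would therefore want to pin down precisely the extra assumption that is in force; the natural one is $|\Delta(S)|\ge 3$ (equivalently $\Delta(S)\ne\{1,2\}$), under which $\max\{\delta_1,\delta_3\}\ge 3$ is immediate and the irreducibility arguments of the two previous paragraphs finish the proof. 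I would also note that these arguments do \emph{not} extend to show $\Delta(S)$ must be an initial segment of $\mathbb N$: e.g. $S=\langle 5,13,22\rangle$ has $\delta_1=5$, $\delta_3=2$ and $\Delta(S)=\{1,2,3,5\}$, so the strongest "no gaps" form of the statement is false in embedding dimension three.
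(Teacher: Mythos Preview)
Your irreducibility arguments for $i=2$ and $i=3$ are correct and follow the same template as the paper's proof: both start from Theorem~\ref{main} and Lemma~\ref{romper} to force the unique decompositions $\bs\lambda_2=2\bs\lambda_1$, $\bs\mu_2=2\bs\mu_1$ (resp.\ $\bs\lambda_3=\bs\lambda_1+\bs\lambda_2$, $\bs\mu_3=\bs\mu_1+\bs\mu_2$) and then derive a contradiction. The paper reaches the contradiction a bit more cleanly by using the shift $\bs\mu_l=\bs\lambda_l+(\delta_3,-\delta_1)$ from Lemma~\ref{negativo-l1}(2) to deduce directly $\bs\lambda_1=\bs\mu_1$ (for $i=2$: $\bs\lambda_1+\bs\mu_1=2\bs\lambda_1+(\delta_3,-\delta_1)=\bs\lambda_2+(\delta_3,-\delta_1)=\bs\mu_2=2\bs\mu_1$), whereas you push the same information through the range inequalities of Definition~\ref{couples}; the two routes are equivalent in substance.

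Where you diverge from the paper is in step~(a), and here you have caught a real issue. The paper's proof silently invokes Theorem~\ref{main} at $i=3$, which requires $3\le\max\{\delta_1,\delta_3\}$, and this is \emph{not} implied by $|\Delta(S)|>1$. Your example $S=\langle 4,7,9\rangle$ is nonsymmetric of embedding dimension three with $\delta_1=2$, $\delta_3=1$, hence $\Delta(S)=\{1,2\}$ by Lemma~\ref{minmax}: it satisfies all hypotheses of the corollary but not the conclusion $3\in\Delta(S)$. So the statement is false as written, and your suggested repair --- replacing $|\Delta(S)|>1$ by $|\Delta(S)|\ge 3$ (equivalently $\max\{\delta_1,\delta_3\}\ge 3$) --- is exactly what is needed to make both your argument and the paper's go through.
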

\begin{proof}
Suppose that $2 \not \in \Delta(S)$. By Theorem \ref{main}, $\bs\lambda_2,\bs\mu_2  \not \in I_{\delta_1,\delta_3}$. Hence (recall that $\bs \lambda_i=\bs \lambda_j+\bs \lambda_k$ implies $i=j+k$).  We must  then have $\bs\lambda_2=2\bs\lambda_1$ and $\bs\mu_2=2\bs\mu_1$. However, since $\bs\lambda_l+(\delta_3,-\delta_1)=\bs\mu_l$ for every $l$ (Lemma \ref{negativo-l1}), this implies \[\bs \lambda_1+\bs\mu_1= 2\bs\lambda_1+(\delta_3,-\delta_1)=\bs\lambda_2+(\delta_3,-\delta_1)=\bs\mu_2=2\bs \mu_1.\]
It follows that $\bs\mu_1=\bs\lambda_1$, contradicting the definition.

Now assume that  $3 \not \in \Delta(S)$. By Theorem \ref{main} $\bs\lambda_3,\bs\mu_3 \not \in I_{\delta_1,\delta_3}$. Hence we must have $\bs\lambda_3=\bs\lambda_1+\bs\lambda_2$ and $\bs\mu_3=\bs\mu_1+\bs\mu_2$. Here we obtain 
\[
\bs \lambda_1 +\bs \mu_2= \bs \lambda_1+\bs \lambda_2+(\delta_3,-\delta_1)= \bs\lambda_3+(\delta_3,-\delta_1)=\bs \mu_3=\bs\mu_1+\bs \mu_2,
\]
and thus $\bs\mu_1=\bs\lambda_1$, yielding again a contradiction.
\end{proof}

\begin{remark}
In the proof of the above corollary the contradiction is reached once we obtain that $\bs \lambda_i=\bs \lambda_j+\bs \lambda_k$ and $\bs \mu_i=\bs \mu_j+\bs \mu_k$ for some $i,j,k \in \mathbb{Z}^+$.

So we cannot guarantee that $4 \in \Delta(S)$ under the same assumptions, because in counterexamples such as $S=\langle 7,18,19 \rangle$ we get $\bs\mu_4=\bs\mu_1+\bs\mu_3$ and  $\bs\lambda_4=\bs\lambda_2+\bs\lambda_2$, whence $\bs\lambda_4 \not \in I_{\delta_1,\delta_3}$ and $\bs\mu_4 \not \in I_{\delta_1,\delta_3}$. So $4 \not \in \Delta(S)$.
\end{remark}

Under some extra assumptions we can get more information.

\begin{corollary}
If, in addition, $\min\{\delta_1,\delta_3\}=1$, then $\Delta(S)=\{1,2,\ldots,\max\{\delta_1,\delta_3\}\}$.
\end{corollary}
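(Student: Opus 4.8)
The plan is to apply Theorem~\ref{main} with the extra hypothesis $\min\{\delta_1,\delta_3\}=1$ (and, as always, $\gcd(\delta_1,\delta_3)=1$, which is automatic here). Say $\delta_1=1$ and $\delta_3=d:=\max\{\delta_1,\delta_3\}$ (the case $\delta_3=1$ is symmetric). Then for each $i\in\{1,\dots,d\}$ the $\lambda$-B\'ezout couple $\bs\lambda_i=(\lambda_{i1},\lambda_{i3})$ must satisfy $\lambda_{i1}\cdot 1+\lambda_{i3}\cdot d=i$ with $0<\lambda_{i3}\le \delta_1=1$, so $\lambda_{i3}=1$ and $\lambda_{i1}=i-d$. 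That is, $\bs\lambda_i=(i-d,1)$ for every $i$. By Remark~\ref{withone}, every $\lambda$-B\'ezout couple of the form $(x,1)$ is irreducible, so $\bs\lambda_i\in\mathcal I^{(\bs\lambda)}_{\delta_1,\delta_3}\subseteq I_{\delta_1,\delta_3}$ for all $i\in\{1,\dots,d\}$.

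Consequently, for every $i\in\{1,\dots,d\}$ we have $\bs x_i\in I_{\delta_1,\delta_3}$ (taking $\bs x_i=\bs\lambda_i$), and Theorem~\ref{main} immediately gives $i\in\Delta(S)$. Combined with the trivial inclusion $\Delta(S)\subseteq\{1,\dots,\max\{\delta_1,\delta_3\}\}$ coming from Lemma~\ref{minmax} (here $\min\Delta(S)=\gcd(\delta_1,\delta_3)=1$ and $\max\Delta(S)=\max\{\delta_1,\delta_3\}=d$), this yields $\Delta(S)=\{1,2,\dots,d\}=\{1,2,\dots,\max\{\delta_1,\delta_3\}\}$, which is exactly the claim. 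If $\delta_3=1$ instead, one argues identically with $\mu$-B\'ezout couples: $\bs\mu_i=(1,i-\delta_1)$ has the form $(1,y)$ and is irreducible by Remark~\ref{withone}.

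There is essentially no obstacle here: once Theorem~\ref{main} is in hand, the corollary is a one-line observation, because the constraint $0<\lambda_{i3}\le\delta_1=1$ pins down $\lambda_{i3}=1$, forcing every $\lambda$-B\'ezout couple into the irreducible set via Remark~\ref{withone}. The only thing to be careful about is bookkeeping of which of $\delta_1,\delta_3$ equals $1$ and using $\lambda$ versus $\mu$ couples accordingly; both cases are covered by the symmetry built into Definition~\ref{couples}. So I would write this proof in just a few sentences rather than developing any new machinery.

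\begin{proof}
By Lemma \ref{minmax}, $\min\Delta(S)=\gcd(\delta_1,\delta_3)=1$ and $\max\Delta(S)=\max\{\delta_1,\delta_3\}$, so $\Delta(S)\subseteq\{1,\ldots,\max\{\delta_1,\delta_3\}\}$. For the reverse inclusion, set $d=\max\{\delta_1,\delta_3\}$ and suppose first that $\delta_1=1$, so $\delta_3=d$. For $i\in\{1,\ldots,d\}$ the $\lambda$-B\'ezout couple $\bs\lambda_i=(\lambda_{i1},\lambda_{i3})$ satisfies $\lambda_{i1}+\lambda_{i3}d=i$ with $0<\lambda_{i3}\le\delta_1=1$, forcing $\lambda_{i3}=1$ and $\lambda_{i1}=i-d$. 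By Remark \ref{withone}, $\bs\lambda_i=(i-d,1)\in\mathcal I^{(\bs\lambda)}_{\delta_1,\delta_3}\subseteq I_{\delta_1,\delta_3}$. Hence, by Theorem \ref{main}, $i\in\Delta(S)$ for every $i\in\{1,\ldots,d\}$. If instead $\delta_3=1$, the same argument applied to the $\mu$-B\'ezout couples $\bs\mu_i=(1,i-\delta_1)$, which are of the form $(1,y)$ and thus irreducible by Remark \ref{withone}, gives $i\in\Delta(S)$ for every $i\in\{1,\ldots,\delta_1\}$. In either case $\Delta(S)=\{1,2,\ldots,\max\{\delta_1,\delta_3\}\}$.
\end{proof}
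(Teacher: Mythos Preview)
Your proof is correct and follows essentially the same approach as the paper: use the constraint $0<\lambda_{i3}\le\delta_1=1$ to force $\lambda_{i3}=1$, then invoke Remark~\ref{withone} and Theorem~\ref{main}. The only cosmetic difference is that the paper treats the value $1$ separately via the $\mu$-couple $(1,0)$, whereas you handle all $i$ uniformly with the $\lambda$-couple $(i-d,1)$; your version is in fact slightly cleaner.
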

\begin{proof}
Suppose $\delta_1=1$. Then we have that $\delta_3-i=-i\delta_1+1\delta_3$ for $i\in\{0,\ldots,\delta_3-2\}$ and $1=1\delta_1+0\delta_3$. We observe that all this couples are B\'ezout couples and by Remark \ref{withone}, we obtain that they are all irreducible.
\end{proof}

Another important fact that should be highlighted is that $\Delta(S)$ does not depend on the generators of $S$, but on $\delta_1$ and $\delta_3$. So nonsymmetric numerical semigroups with embedding dimension three and with the same $\delta_1$ and $\delta_3$ will have the same Delta sets.

\section{Euclid's Algorithm and Delta sets}

Let $\delta_j$ and $\delta_k$ be integers with  $1<\delta_j<\delta_k$ and $\{j,k\}=\{1,3\}$. We will highlight the fact that $\bs x_i=(x_{ik},x_{ij})$ is a B\'ezout couple by explicitly saying that $\bs x_i$ is a B\'ezout couple for  $(\delta_j,\delta_k)$.  In this setting $0<i\le \delta_k$,  $i=x_{ik}\delta_k+x_{ij}\delta_j$, $0\le |x_{ik}|\le \delta_j$ and $0\le |x_{ij}|\le \delta_k$.

Denote by $\peb[x]$ the largest integer less than $x$.

\begin{lemma}\label{modulo}
Let $\bs x_i=(x_{ik},x_{ij})$ be a B\'ezout couple for $(\delta_k,\delta_j)$ with $i\le \delta_j$. Then 
\[
\bs x'_i=(\peb [\delta_k/\delta_j]x_{ik}+x_{ij},x_{ik})
\]
is a B\'ezout couple for $(\delta_j, \delta_k \bmod \delta_j)$. 
\end{lemma}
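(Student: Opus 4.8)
The plan is to verify directly that the claimed pair $\bs x'_i=(\peb[\delta_k/\delta_j]x_{ik}+x_{ij},x_{ik})$ satisfies the three defining conditions of a B\'ezout couple for the pair $(\delta_j,\delta_k \bmod \delta_j)$: namely that it produces the value $i$ as the appropriate linear combination, and that each coordinate satisfies the size bound recorded just before the lemma. Write $q=\peb[\delta_k/\delta_j]$ and $\delta_k = q\delta_j + r$ with $r=\delta_k\bmod\delta_j$, so $0\le r<\delta_j$; since $\gcd(\delta_j,\delta_k)=1$ we actually have $0<r<\delta_j$ (unless $\delta_j=1$, which is excluded by the standing hypothesis $1<\delta_j$). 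First I would substitute this into the hypothesis $i = x_{ik}\delta_k + x_{ij}\delta_j$ to get
\[
i = x_{ik}(q\delta_j+r) + x_{ij}\delta_j = (q x_{ik}+x_{ij})\delta_j + x_{ik} r,
\]
which is exactly the linear combination $\bs x'_i$ is supposed to realize, with first coordinate $q x_{ik}+x_{ij}$ paired with $\delta_j$ and second coordinate $x_{ik}$ paired with $r=\delta_k\bmod\delta_j$. So the arithmetic identity is immediate.

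Next I would check the two size inequalities. For the coordinate $x_{ik}$ paired with $r$: a B\'ezout couple for $(\delta_j,r)$ requires the coefficient of the \emph{larger} modulus to be bounded in absolute value by the smaller one, i.e. $|x_{ik}|\le r$ if $r>\delta_j$ — but here $r<\delta_j$, so actually $x_{ik}$ plays the role of the coefficient of the smaller modulus, and I need $|x_{ik}|\le \delta_j$, which is part of the hypothesis on $\bs x_i$. For the coordinate $q x_{ik}+x_{ij}$ paired with $\delta_j$, I need $|q x_{ik}+x_{ij}|\le \delta_k\bmod\delta_j = r$; this is the only bound requiring real work. The key observation is that $\bs x_i$ is a B\'ezout couple for $(\delta_k,\delta_j)$ with the defining sign/size constraints of Definition \ref{couples} (and Lemma \ref{twisted}): since $i\le\delta_j<\delta_k$, Lemma \ref{twisted} gives $|x_{ik}|\le|x_{ij}|$, and the two coordinates have opposite signs (this follows from Corollary \ref{performance}/the sign analysis, or directly: $x_{ik}\delta_k+x_{ij}\delta_j=i>0$ with $|i|$ small forces opposite signs unless one vanishes). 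Combining $i = (q x_{ik}+x_{ij})\delta_j + x_{ik} r$ with $0<i\le\delta_j$ and a case split on the sign of $x_{ik}$, one bounds $q x_{ik}+x_{ij} = (i - x_{ik} r)/\delta_j$: if $x_{ik}\le 0$ then $0 < i - x_{ik} r \le \delta_j + (\delta_j)(\delta_j)$... I would instead argue as in Lemma \ref{twisted}: divide $i = (q x_{ik}+x_{ij})\delta_j + x_{ik} r$ by $\delta_j$ to get $0 < (q x_{ik}+x_{ij}) + x_{ik}(r/\delta_j) \le 1$, and then since $0 \le |x_{ik}| \le \delta_j$ forces $|x_{ik}(r/\delta_j)| < r \le \delta_j$, conclude $|q x_{ik}+x_{ij}| \le r$ by the same two-case sign argument used in the proof of Lemma \ref{twisted}.

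The main obstacle I anticipate is precisely this last bound $|q x_{ik}+x_{ij}|\le \delta_k\bmod\delta_j$: it is the content of the lemma (everything else is bookkeeping) and it requires using the \emph{strict} optimality of $\bs x_i$ as a B\'ezout couple — the bare identity does not suffice, since an arbitrary integer solution of $i = u\delta_j + v r$ need not have $|u|$ small. I would therefore lean on the inequality $|x_{ik}|\le|x_{ij}|$ from Lemma \ref{twisted} together with the sign information, mirroring the structure of that proof almost verbatim; one should also double-check the boundary edge cases ($x_{ik}=0$, which makes $x_{ij}$ a positive multiple of... in fact $x_{ij}\delta_j=i\le\delta_j$ forces $x_{ij}=1$ and then $\bs x'_i=(x_{ij},0)=(1,0)$, a legitimate B\'ezout couple) to make sure the closed/open ends of the intervals in Definition \ref{couples} are respected. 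No step should require a genuinely new idea beyond re-running the Lemma \ref{twisted} argument in the reduced pair.
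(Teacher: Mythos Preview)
Your proposal is correct and follows essentially the same route as the paper: verify the identity $i=(qx_{ik}+x_{ij})\delta_j+x_{ik}r$ by substitution, then divide $0<i\le\delta_j$ by $\delta_j$ and run a two-case sign split on $x_{ik}$ to obtain the bound $|qx_{ik}+x_{ij}|\le r$. The paper isolates the boundary case $i=\delta_j$ (your $x_{ik}=0$) explicitly at the outset and does not invoke Lemma~\ref{twisted} or Corollary~\ref{performance}, working instead directly from the B\'ezout-couple bound $|x_{ik}|\le\delta_j$, but the substance of the argument is the same.
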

\begin{proof}
Notice that, for $i<\delta_j$,
\[
\left(\peb [\delta_k/\delta_j]x_{ik}+x_{ij}\right)\delta_j+x_{ik}(\delta_k\bmod \delta_j)=x_{ik}\delta_k+x_{ij}\delta_j=i.
\]
For $i=\delta_j$, we have $\bs x_{\delta_j}=(0,1)$ and $\bs x'_{\delta_j}=(1,0)$. We check that $\bs x'_i$ are indeed B\'ezout couples for $(\delta_j, \delta_k \bmod \delta_j)$, $i<\delta_j$. 

As $0<x_{ik}\delta_k+x_{ij}\delta_j=i< \delta_j$, dividing by $\delta_j$ we obtain 
\[
0<(\delta_k/\delta_j)x_{ik}+x_{ij}< 1.
\]
Since $0<x_{ik}\delta_k+x_{ij}\delta_j<\delta_j$ and $\delta_k=\peb[\delta_k/\delta_j]\delta_j+\delta_k\bmod \delta_j$, we have 
$0<x_{ik}(\peb[\delta_k/\delta_j]\delta_j+\delta_k\bmod \delta_j)+x_{ij}\delta_j<\delta_j$. Dividing again by $\delta_j$ we obtain
\[
0<(\peb[\delta_k/\delta_j]x_{ik}+x_{ij})+(x_{ik}/\delta_j)(\delta_k\bmod \delta_j)<1.
\]
We distinguish two cases depending on the sign of $x_{ik}$.
\begin{enumerate}[$\bullet$]
\item For $x_{ik}> 0$, as $\bs x_i$ is a B\'ezout couple, we know that $x_{ik}\le\delta_j$. Observe that
$\peb[\delta_k/\delta_j]x_{ik}+x_{ij}< (\delta_k/\delta_j)x_{ik}+x_{ij} < 1$. So $\peb [ \delta_k/\delta_j]x_{ik}+x_{ij}\le 0$, 
which gives us, using the second set of inequalities, and $x_{ik}\le \delta_j$:
\[
0\le -(\peb [\delta_k/\delta_j]x_{ik}+x_{ij}) <(x_{ik}/\delta_j)(\delta_k\bmod \delta_j)\le \delta_k\bmod \delta_j.
\]
We have obtained that both coordinates of $\bs x'_i$ satisfy $ -(\delta_k\bmod \delta_j)<\peb [\delta_k/\delta_j]x_{ik}+x_{ij}\le 0  $ and $0<x_{ik}\le \delta_j$.
\item While, if $x_{ik}\le 0$, as $\bs x_i$ is a B\'ezout couple, we have $x_{ik}>-\delta_j$. Also,  
$\peb[\delta_k/\delta_j]x_{ik}+x_{ij}\ge (\delta_k/\delta_j)x_{ik}+x_{ij} >0$. So, from the second set of inequalities we have $\peb[\delta_k/\delta_j]x_{ik}+x_{ij}<1-(x_{ik}/\delta_j)(\delta_k\bmod \delta_j)$, 
which yields
\[
0< \peb [\delta_k/\delta_j]x_{ik}+x_{ij} < 1+|(x_{ik}/\delta_j)|\delta_k\bmod \delta_j<1+\delta_k\bmod \delta_j.
\]
Deducing that $0<\peb [\delta_k/\delta_j]x_{ik}+x_{ij}\le\delta_k\bmod \delta_j$, and as $-\delta_j<x_{ik}\le 0$ we obtain again that $\bs x'_i$ is a B\'ezout couple for $(\delta_j, \delta_k \bmod \delta_j)$.\qedhere
\end{enumerate}
\end{proof}
\begin{remark}
From the proof of Lemma \ref{modulo}, it follows that 
\begin{itemize}
\item if $\bs x_i$ is a $\lambda$-B\'ezout couple, then $\bs x'_i$ is a $\mu$-B\'ezout couple;
\item if $\bs x_i$ is a $\mu$-B\'ezout couple, then $\bs x'_i$ is a $\lambda$-B\'ezout couple.
\end{itemize} 
\end{remark}

The above construction can be reversed.

\begin{lemma}\label{map}
 Let $i$ be a natural number $i\leq \delta_j$, and we consider $\bs x'_i=(x'_{ij},x'_{ik})$ a B\'ezout couple for $(\delta_j,\delta_k\bmod \delta_j)$. Then $\bs x_i=(x'_{ik},x'_{ij}-x'_{ik}\peb [\delta_k/\delta_j])$ is a B\'ezout couple for $(\delta_k,\delta_j)$.
\end{lemma}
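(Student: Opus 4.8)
The plan is to obtain Lemma \ref{map} as a formal consequence of Lemma \ref{modulo}, by recognizing the displayed formula as the inverse of the map appearing there. First I would record that the two formulas are mutually inverse as maps $\mathbb Z^2\to\mathbb Z^2$: if one writes the output $(\peb[\delta_k/\delta_j]x_{ik}+x_{ij},\,x_{ik})$ of Lemma \ref{modulo} as $(x'_{ij},x'_{ik})$, then substituting into $(x'_{ik},\,x'_{ij}-x'_{ik}\peb[\delta_k/\delta_j])$ returns $(x_{ik},x_{ij})$, and the composite in the other order is just as immediate. In particular the map of Lemma \ref{modulo} is injective.

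Next I would check, directly from the formula, the two pieces of data of a B\'ezout couple that are cheap: the index and the B\'ezout identity. Using $\delta_k=\peb[\delta_k/\delta_j]\delta_j+(\delta_k\bmod\delta_j)$ we get
\[
x'_{ik}\delta_k+\bigl(x'_{ij}-x'_{ik}\peb[\delta_k/\delta_j]\bigr)\delta_j
= x'_{ik}(\delta_k\bmod\delta_j)+x'_{ij}\delta_j = i ,
\]
and $0<i\le\delta_j<\delta_k$ because $\bs x'_i$ is a B\'ezout couple for $(\delta_j,\delta_k\bmod\delta_j)$, whose indices all lie in $\{1,\dots,\delta_j\}$. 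To finish, I would use a counting argument rather than re-derive all the inequalities. Since $\gcd(\delta_j,\delta_k\bmod\delta_j)=\gcd(\delta_j,\delta_k)=1$, since $1<\delta_j$ forces $1\le\delta_k\bmod\delta_j<\delta_j$, and since $\bs\lambda_i\ne\bs\mu_i$ always (by Lemma \ref{negativo-l1}(2) an equality would force the smaller modulus to be $0$), there are exactly $2\delta_j$ B\'ezout couples for $(\delta_j,\delta_k\bmod\delta_j)$, and likewise exactly $2\delta_j$ B\'ezout couples for $(\delta_k,\delta_j)$ of index $\le\delta_j$. By Lemma \ref{modulo} the map of that lemma carries the latter set into the former; being an injection between finite sets of the same cardinality it is a bijection, and its inverse is precisely the map of Lemma \ref{map}. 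Hence that map sends each B\'ezout couple for $(\delta_j,\delta_k\bmod\delta_j)$ to a B\'ezout couple for $(\delta_k,\delta_j)$ of the same index $i\le\delta_j$, which is the assertion.

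I expect the only friction to be bookkeeping: making the cardinality count airtight (ruling out $\delta_j\mid\delta_k$ and the coincidence $\bs\lambda_i=\bs\mu_i$, and lining up the index ranges with the paper's convention that the first slot of a couple ``for $(\alpha,\beta)$'' carries the coefficient of $\max\{\alpha,\beta\}$). If a self-contained argument avoiding counting is preferred, the alternative is to imitate the proof of Lemma \ref{modulo} step by step: from $0<x'_{ik}(\delta_k\bmod\delta_j)+x'_{ij}\delta_j=i\le\delta_j$ one splits on the sign of $x'_{ik}$, uses $|x'_{ik}|\le\delta_k\bmod\delta_j<\delta_j$ to bound the first coordinate of $\bs x_i$, and bounds the second coordinate $x'_{ij}-x'_{ik}\peb[\delta_k/\delta_j]$ into $(-\delta_k,0]$ or $[0,\delta_k)$ with the strict inequality distinguishing the $\lambda$- and $\mu$-types; the boundary index $i=\delta_j$ (where $\bs x'_{\delta_j}=(1,0)$ maps to $\bs x_{\delta_j}=(0,1)$) and the $\lambda\leftrightarrow\mu$ swap are handled exactly as in the remark following Lemma \ref{modulo}. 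The inverse-map route seems the cleaner writeup.
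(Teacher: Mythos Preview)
Your argument is correct, and the primary route you take --- recognizing the formula as the $\mathbb Z^2$-inverse of the map in Lemma \ref{modulo} and then using a cardinality count to promote that injection to a bijection --- is genuinely different from the paper's proof. The paper argues directly, exactly as in your ``alternative'' sketch: it verifies the identity $x'_{ik}\delta_k+(x'_{ij}-x'_{ik}\peb[\delta_k/\delta_j])\delta_j=i$, disposes of $i=\delta_j$ by hand, and for $i<\delta_j$ splits on the sign of $x'_{ik}$, bounding each coordinate of $\bs x_i$ into the required interval by manipulating the inequalities $0<x'_{ij}+(x'_{ik}/\delta_j)(\delta_k\bmod\delta_j)<1$ and their rewriting via $\delta_k\bmod\delta_j=\delta_k-\peb[\delta_k/\delta_j]\delta_j$. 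Your counting approach is cleaner and explains \emph{why} Lemmas \ref{modulo} and \ref{map} come as a pair, at the price of depending on Lemma \ref{modulo} rather than being self-contained; the paper's approach duplicates the inequality work but gives the explicit interval membership (and hence the $\lambda\leftrightarrow\mu$ swap) without appeal to anything else. One small point of friction you already flag: the paper's proof of Lemma \ref{modulo} only treats the couple $(0,1)$ at $i=\delta_j$ explicitly, so to make the $2\delta_j$-to-$2\delta_j$ count airtight you should verify the second B\'ezout couple at $i=\delta_j$ separately (a one-line check).
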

\begin{proof}
It is clear that for $\bs x'_{\delta_j}=(1,0)$ we obtain $\bs x_{\delta_j}=(0,1)$. So, we can consider $i<\delta_j$. It is also easy  to check that 
$x'_{ik}\delta_k+(x'_{ij}-x'_{ik}\peb [\delta_k/\delta_j])\delta_j=x'_{ij}\delta_j+x'_{ik}(\delta_k-\peb [\delta_k/\delta_j]\delta_j)=x'_{ij}\delta_j+x'_{ik}(\delta_k\bmod \delta_j)=i$.

From this, we have $0<x'_{ij}\delta_j+x'_{ik}(\delta_k\bmod \delta_j)<\delta_j$. Dividing by $\delta_j$ we obtain:
\[
0<x'_{ij}+(x'_{ik}/\delta_j)(\delta_k\bmod \delta_j)<1.
\]
From here, and using that $\delta_k\bmod\delta_j=\delta_k-\peb[\delta_k/\delta_j]\delta_j$, we have $0<x'_{ij}+(x'_{ik}/\delta_j)(\delta_k-\peb[\delta_k/\delta_j]\delta_j)<1$, or equivalently 
\[
0<x'_{ij}-\peb[\delta_k/\delta_j]x'_{ik}+(x'_{ik}/\delta_j)\delta_k<1.
\]
As in Lemma \ref{modulo}, we distinguish two cases.
\begin{enumerate}[$\bullet$]
\item If $x'_{ik}\le 0$, we know that $-\delta_j<x'_{ik}$ and we have $(x'_{ik}/\delta_j)\delta_k\le 0$. Then $x'_{ij}-\peb[\delta_k/\delta_j]x'_{ik}>0$, because both summands are positive. So we have $0<x'_{ij}-\peb[\delta_k/\delta_j]x'_{ik}<1-(x'_{ik}/\delta_j)\delta_k<1+\delta_k$, since $-x'_{ik}=|x'_{ik}|<\delta_j$. Hence $x'_{ij}-\peb[\delta_k/\delta_j]x'_{ik}\le \delta_k$. Obtaining that $\bs x_i$ is a B\'ezout couple. 
\item If $x'_{ik}>0$,  we know that $x'_{ik}\le\delta_j$ and, from the first equation, we have that $x'_{ij}<1$, or equivalently $x'_{ij}\le 0$. So we can write, $0\le -x'_{ij}<(x'_{ik}/\delta_j)(\delta_k\bmod \delta_j)=(x'_{ik}/\delta_j)(\delta_k-\peb[\delta_k/\delta_j]\delta_j)=(x'_{ik}/\delta_j)\delta_k-\peb[\delta_k/\delta_j]x'_{ik}$. Adding $ \peb[\delta_k/\delta_j]x'_{ik}$, we obtain $0<\peb[\delta_k/\delta_j]x'_{ik}\le\peb[\delta_k/\delta_j]x'_{ik}-x'_{ij}<(x'_{ik}/\delta_j)\delta_k<\delta_k$, as $x'_{ik}<\delta_j$. So, $-\delta_k<x'_{ij}-\peb[\delta_k/\delta_j]x'_{ik}\le 0$ and as $0<x'_{ik}\le \delta_j$. This proves that $\bs x_i$ is a B\'ezout couple.\qedhere
\end{enumerate}
\end{proof}

\begin{defn}
Set $\mathcal B(\delta_k,\delta_j)=\{\bs x_i \mid \bs x_i\mbox{ is a B\'ezout couple for } (\delta_k,\delta_j)\}$.
\end{defn}

\begin{proposition}
The map $f\colon\mathcal B(\delta_j,\delta_k\bmod \delta_j)\rightarrow\mathcal B(\delta_k,\delta_j)$,  $\bs x'_i\mapsto \bs x_i$, coming from Lemma \ref{map}, is additive and injective.  
\end{proposition}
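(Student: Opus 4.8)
The plan is to verify the three asserted properties of $f$ — that it is well-defined, additive, and injective — directly from the explicit formula in Lemma \ref{map}, exploiting the fact that $f$ and the map of Lemma \ref{modulo} are mutually inverse. Well-definedness is exactly Lemma \ref{map}: starting from a B\'ezout couple $\bs x'_i=(x'_{ij},x'_{ik})$ for $(\delta_j,\delta_k\bmod\delta_j)$, the pair $\bs x_i=(x'_{ik},x'_{ij}-x'_{ik}\peb[\delta_k/\delta_j])$ lies in $\mathcal B(\delta_k,\delta_j)$, and it is attached to the same index $i$ since the computation in that lemma shows $x'_{ik}\delta_k+(x'_{ij}-x'_{ik}\peb[\delta_k/\delta_j])\delta_j=i$. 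So $f$ is a genuine map $\mathcal B(\delta_j,\delta_k\bmod\delta_j)\to\mathcal B(\delta_k,\delta_j)$ respecting indices.

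For additivity, suppose $\bs x'_i,\bs y'_l$ are B\'ezout couples for $(\delta_j,\delta_k\bmod\delta_j)$ with $\bs x'_i+\bs y'_l$ again a B\'ezout couple for that pair (necessarily of index $i+l$, by Lemma \ref{romper} applied in the $(\delta_j,\delta_k\bmod\delta_j)$ setting). The formula in Lemma \ref{map},
\[
f(x'_{1},x'_{2})=\bigl(x'_{2},\,x'_{1}-\peb[\delta_k/\delta_j]x'_{2}\bigr),
\]
is a linear function of the coordinates (the floor $\peb[\delta_k/\delta_j]$ being a fixed integer constant once $\delta_j,\delta_k$ are fixed), hence
\[
f(\bs x'_i+\bs y'_l)=f(\bs x'_i)+f(\bs y'_l),
\]
and since the left-hand side is, by well-definedness, a B\'ezout couple for $(\delta_k,\delta_j)$ of index $i+l$, this exhibits $f(\bs x'_i)+f(\bs y'_l)$ as a B\'ezout couple, which is what additivity asserts.

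Injectivity is immediate from the same linearity, or, more structurally, from the fact that the assignment $\bs x_i\mapsto\bs x'_i$ of Lemma \ref{modulo} is a one-sided inverse: composing the formula of Lemma \ref{modulo} with that of Lemma \ref{map} returns the original couple, because $\peb[\delta_k/\delta_j]x_{ik}+x_{ij}$ followed by subtracting $\peb[\delta_k/\delta_j]$ times the first coordinate recovers $x_{ij}$, and the first coordinate $x_{ik}$ is carried along unchanged (and the boundary index $i=\delta_j$ is handled by the explicit check $\bs x_{\delta_j}=(0,1)\leftrightarrow(1,0)$ in both lemmas). A left inverse forces injectivity. I do not expect a serious obstacle here: the only point requiring care is bookkeeping the index $i$ through the correspondence — ensuring that ``$\bs x'_i\mapsto\bs x_i$'' really does preserve $i$, which is guaranteed by the displayed identity $x'_{ik}\delta_k+(x'_{ij}-x'_{ik}\peb[\delta_k/\delta_j])\delta_j=i$ from Lemma \ref{map} — and confirming that the sum of two B\'ezout couples, when it happens to be a B\'ezout couple, has the expected index, which is Lemma \ref{romper}.
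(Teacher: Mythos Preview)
Your proposal is correct and takes essentially the same approach as the paper: the paper's proof simply states that the result ``follows easily from the definition of $\bs x_i$ in Lemma \ref{map} and Lemma \ref{modulo},'' and your argument is precisely an unpacking of that sentence---using the explicit linear formula from Lemma \ref{map} for additivity and the map of Lemma \ref{modulo} as a left inverse for injectivity.
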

\begin{proof}
Follows easily from the definition of $\bs x_i$ in Lemma \ref{map} and Lemma \ref{modulo}.
\end{proof}

\begin{remark}
Note that the elements in $\mathrm{Im}(f)$ correspond with B\'ezout couples for numbers smaller than or equal than $\delta_j$.
\end{remark}

Now, we want to prove that the irreducible B\'ezout couples in $\mathcal B(\delta_k,\delta_j)\setminus \mathrm{Im}(f)$ are  those with $x_{ik}=1$. 
First of all, we compute these B\'ezout couples in the next proposition.
\begin{proposition}\label{Bezoutwithone}
Let $\bs x_i=(1,x_{ij}) \in\mathcal B(\delta_k,\delta_j)$. Then $\bs x_i\not\in  \mathrm{Im}(f)$ if and only if  $-\peb[\delta_k/\delta_j]<x_{ij}\le 0$.
\end{proposition}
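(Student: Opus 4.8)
The statement to prove is Proposition~\ref{Bezoutwithone}: for a B\'ezout couple $\bs x_i=(1,x_{ij})\in\mathcal B(\delta_k,\delta_j)$ with leading coordinate equal to $1$, we have $\bs x_i\notin\mathrm{Im}(f)$ if and only if $-\peb[\delta_k/\delta_j]<x_{ij}\le 0$. The plan is to work directly from the explicit formula for $f$ given in Lemma~\ref{map}, together with the characterization in the Remark after the Proposition that $\mathrm{Im}(f)$ consists exactly of the B\'ezout couples for indices $i\le\delta_j$.

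First I would recall that any $\bs x_i=(1,x_{ij})\in\mathcal B(\delta_k,\delta_j)$ automatically satisfies $i=\delta_k+x_{ij}\delta_j$ and $-\delta_k<x_{ij}\le 0$ (this is the $\lambda$-type normalization: the first coordinate lies in $(0,\delta_j]$, here it is $1$, so $x_{ij}\le 0$ by Lemma~\ref{secondcoordinate}-type sign reasoning, and $x_{ij}>-\delta_k$ from $i>0$). So the couples of this shape are parametrized by $x_{ij}\in\{0,-1,\ldots,-(\delta_k-1)\}$, with corresponding index $i=\delta_k+x_{ij}\delta_j$ ranging over $\{\delta_k,\delta_k-\delta_j,\delta_k-2\delta_j,\ldots\}$.

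Next I would use the Remark that $\bs x_i\in\mathrm{Im}(f)$ precisely when $i\le\delta_j$: by injectivity of $f$ and additivity, the image is $\{f(\bs x'_i)\}$ over B\'ezout couples $\bs x'_i$ for $(\delta_j,\delta_k\bmod\delta_j)$, and those exist exactly for $i\in\{1,\ldots,\delta_j\}$; one must also check that the formula in Lemma~\ref{map} applied to such an $\bs x'_i$ recovers the first coordinate $x'_{ik}$, which will be $1$ precisely when it is a couple of the right shape (so we do not pick up or lose couples of the form $(1,\ast)$ through $f$). Thus $\bs x_i\notin\mathrm{Im}(f)$ iff $i>\delta_j$, i.e. iff $\delta_k+x_{ij}\delta_j>\delta_j$, i.e. iff $x_{ij}>1-\delta_k/\delta_j$, which (since $x_{ij}$ is an integer and $\delta_k/\delta_j$ need not be) rearranges to $x_{ij}\ge 1-\peb[\delta_k/\delta_j]$ when $\delta_j\nmid\delta_k$, and more carefully to $x_{ij}>-\peb[\delta_k/\delta_j]$ in general; combined with $x_{ij}\le 0$ this gives exactly $-\peb[\delta_k/\delta_j]<x_{ij}\le 0$. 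The main obstacle I anticipate is the boundary/divisibility bookkeeping: translating the inequality $i>\delta_j$ into the stated bound on $x_{ij}$ requires care with $\peb[\cdot]$ (strict vs. non-strict floor) and with the edge case $i=\delta_j$ itself, where $\bs x_{\delta_j}=(0,1)$ is not of the form $(1,x_{ij})$ anyway, so it does not actually arise; I would handle this by checking the two cases $\delta_j\mid\delta_k$ and $\delta_j\nmid\delta_k$ separately if needed, and by verifying the extreme values $x_{ij}=0$ (giving $i=\delta_k>\delta_j$, indeed not in the image) and $x_{ij}=1-\peb[\delta_k/\delta_j]$ or the nearest integer above $1-\delta_k/\delta_j$ (the last index still exceeding $\delta_j$) explicitly.
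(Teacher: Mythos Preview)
Your proposal is correct and follows essentially the same approach as the paper's proof: both arguments reduce the question to the characterization (stated in the Remark preceding the Proposition, not after it) that $\mathrm{Im}(f)$ consists precisely of the B\'ezout couples with index $i\le\delta_j$, and then translate the inequality $i=\delta_k+x_{ij}\delta_j>\delta_j$ into the bound $x_{ij}>-\peb[\delta_k/\delta_j]$. Your extra concern about tracking ``couples of the form $(1,\ast)$ through $f$'' is unnecessary, since Lemmas~\ref{modulo} and~\ref{map} already establish that $f$ is a bijection onto all B\'ezout couples (of either type) with index at most $\delta_j$; the paper simply invokes Lemma~\ref{modulo} at that step.
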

\begin{proof}
Take $\bs x_i=(1,x_{ij})\in\mathcal B(\delta_k,\delta_j)$. Hence $\delta_k+ x_{ij}\delta_j=i\le \delta_k$, and $x_{ij}\le 0$. 

If $x_{ij}> - \peb[\delta_k/\delta_j]$, then  $0\ge x_{ij}\ge - \peb[\delta_k/\delta_j]+1$. Hence $i\ge \delta_k-\peb [\delta_k/\delta_j]\delta_j+\delta_j=(\delta_k\bmod \delta_j)+\delta_j>\delta_j$. Therefore $\bs x_i\notin \mathrm{Im}(f)$. 

It is clear that $\delta_k+x_{ij}\delta_j$ with $x_{ij}\le -\peb[\delta_k/\delta_j]$ are elements smaller than $\delta_j$ and by Lemma \ref{modulo} the corresponding $\bs x_i$ is  in $\textrm{Im}(f)$. 
\end{proof}

\begin{remark}\label{lessthanj}
Observe that the case $x_{ik}=1$ and $x_{ij}=-\peb[\delta_k/\delta_j]$, yields an irreducible couple with $x_{ik}\delta_k+x_{ij}\delta_j=\delta_k-\peb [\delta_k/\delta_j]\delta_j=\delta_k\bmod \delta_j< \delta_j$.
\end{remark}

\begin{proposition}
Irreducible elements in $\mathcal B(\delta_k,\delta_j)\setminus \mathrm{Im}(f)$ are those B\'ezout couples with $x_{ik}=1$ and $-\peb[\delta_k/\delta_j]<|x_{ij}|\leq 0$.
\end{proposition}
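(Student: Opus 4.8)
The plan is to show the two containments.  One direction is essentially done already: by Remark \ref{withone}, every B\'ezout couple with first coordinate $1$ is irreducible, and by the preceding Proposition \ref{Bezoutwithone} the couples $(1,x_{ij})\in\mathcal B(\delta_k,\delta_j)$ with $-\peb[\delta_k/\delta_j]<x_{ij}\le 0$ are precisely those lying outside $\mathrm{Im}(f)$; so each of them is an irreducible element of $\mathcal B(\delta_k,\delta_j)\setminus\mathrm{Im}(f)$.  (Here I would also note that $x_{ik}=1$ forces $x_{ij}\le 0$, so the condition ``$-\peb[\delta_k/\delta_j]<|x_{ij}|\le 0$'' in the statement really means $-\peb[\delta_k/\delta_j]<x_{ij}\le 0$, matching Proposition \ref{Bezoutwithone}.)

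For the reverse inclusion I would take an irreducible $\bs x_i=(x_{ik},x_{ij})\in\mathcal B(\delta_k,\delta_j)\setminus\mathrm{Im}(f)$ and argue that $x_{ik}=1$.  Since $\bs x_i\notin\mathrm{Im}(f)$, the Remark after the Proposition on $f$ tells us $i>\delta_j$.  Recall from Remark \ref{lessthanj} (and Lemma \ref{modulo}) that the couple $\bs c:=(1,-\peb[\delta_k/\delta_j])$ is a B\'ezout couple with $|\bs c|=\delta_k\bmod\delta_j<\delta_j$, hence $\bs c\in\mathrm{Im}(f)$.  Now $\bs x_i-\bs c=(x_{ik}-1,\,x_{ij}+\peb[\delta_k/\delta_j])$ is an integer vector with $(\bs x_i-\bs c)\cdot(\delta_k,\delta_j)=i-(\delta_k\bmod\delta_j)$, and since $i>\delta_j>\delta_k\bmod\delta_j$ this value lies strictly between $0$ and $\delta_k$; using the sign information for B\'ezout couples (Lemma \ref{twisted} applied with $\delta_j<\delta_k$, giving $|x_{ik}|\le\delta_j$ and $x_{ik}\ge 1$ since $i>0$, together with $x_{ik}\delta_k+x_{ij}\delta_j=i\le\delta_k$ forcing $x_{ij}\le 0$ when $x_{ik}\ge 1$) I would check that $\bs x_i-\bs c$ is again a B\'ezout couple for $(\delta_k,\delta_j)$, of index $i-(\delta_k\bmod\delta_j)<i$.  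If $x_{ik}\ge 2$ this exhibits $\bs x_i=\bs c+(\bs x_i-\bs c)$ as a decomposition into two B\'ezout couples of smaller index, contradicting irreducibility.  Hence $x_{ik}=1$, and then $\bs x_i\notin\mathrm{Im}(f)$ forces $-\peb[\delta_k/\delta_j]<x_{ij}\le 0$ by Proposition \ref{Bezoutwithone}.

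The main obstacle is the middle step: verifying carefully that $\bs x_i-\bs c$ genuinely satisfies the B\'ezout normalization inequalities ($0<|\text{first coordinate}|$ bounded by $\delta_j$, or the analogous $\mu$-type bound), since subtracting $\bs c$ could in principle push a coordinate out of the admissible window.  The way to handle this is to split on whether $\bs x_i$ is a $\lambda$- or a $\mu$-B\'ezout couple and track the sign of $x_{ik}$ exactly as in the proof of Lemma \ref{modulo}; the inequality $i>\delta_j$ is what guarantees there is ``room'' to subtract $\bs c$ without leaving $\mathcal B(\delta_k,\delta_j)$.  One should also dispose of the degenerate possibility $x_{ik}\le 0$: in that case $x_{ij}>0$ and $i=x_{ik}\delta_k+x_{ij}\delta_j$; because any B\'ezout couple of index $\le\delta_j$ lies in $\mathrm{Im}(f)$ while one with $x_{ik}\le 0$ and $i>\delta_j$ would have $x_{ij}\delta_j>i+|x_{ik}|\delta_k\ge\delta_j+\delta_k$, contradicting $|x_{ij}|\le\delta_k$; so $x_{ik}\le 0$ cannot occur for an irreducible couple outside $\mathrm{Im}(f)$, and we are left with $x_{ik}\ge 1$, reducing to the case handled above.
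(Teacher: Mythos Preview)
Your forward inclusion is fine, and your plan for $x_{ik}\ge 2$ (subtracting the fixed couple $\bs c=(1,-\peb[\delta_k/\delta_j])$) can be made to work: since $x_{ik}\ge 2$ forces $x_{ij}\delta_j\le \delta_k-2\delta_k=-\delta_k$, one gets $x_{ij}\le -\delta_k/\delta_j\le -\peb[\delta_k/\delta_j]$, so the second coordinate of $\bs x_i-\bs c$ stays $\le 0$, and the first lies in $[1,\delta_j-1]$; thus $\bs x_i-\bs c$ is again a $\mu$-couple and the decomposition is legitimate. (The paper uses a variable $(1,-(q+1))$ tailored to $i$ rather than the fixed $\bs c$, but your choice also works.)

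The real problem is your treatment of $x_{ik}\le 0$. Your claimed contradiction does not exist. From $x_{ij}\delta_j=i+|x_{ik}|\delta_k$ and $i>\delta_j$ you deduce $x_{ij}\delta_j>\delta_j+\delta_k$ (and even this fails when $x_{ik}=0$), and then assert this contradicts $|x_{ij}|\le\delta_k$. But $x_{ij}\delta_j>\delta_j+\delta_k$ only gives $x_{ij}>1+\delta_k/\delta_j$, which is perfectly compatible with $x_{ij}\le\delta_k$ whenever $\delta_j\ge 2$. Concretely, with $\delta_j=3$, $\delta_k=10$, $i=7$, the $\lambda$-couple is $(-2,9)$: here $x_{ik}=-2\le 0$, $i=7>\delta_j$, and $x_{ij}=9\le\delta_k=10$; no contradiction. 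Such couples genuinely exist outside $\mathrm{Im}(f)$, so you cannot dispose of them by a size argument.

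What is needed instead is to show these couples are \emph{reducible}. The paper does this in two steps: if $x_{ik}=0$ then $i=t\delta_j$ with $t\ge 2$ and $(0,t)=(0,1)+(0,t-1)$; if $x_{ik}<0$ one writes $i=\delta_j+r$ with $0<r<\delta_k$ and checks that the $\lambda$-couple $\bs x_r$ (the one with $-\delta_j<x_{rk}\le 0$) satisfies $x_{rj}<\delta_k$, whence $\bs x_i=(0,1)+\bs x_r$ is a decomposition into $\lambda$-couples. You need an argument of this kind; the impossibility claim should be replaced by a reducibility claim.
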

\begin{proof}
From Remark \ref{withone} we know that B\'ezout couples of the form $(1,x_{ij})$ are irreducible. We need to prove that all elements bigger than $\delta_j$ different from these have associated reducible B\'ezout couples.

For this, suppose that $\delta_k>i>\delta_j$. Now, we can find  $q\le \peb[\delta_k/\delta_j]$ such that $\delta_k-q\delta_j>i>\delta_k-(q+1)\delta_j>0$; in particular, $i=\delta_k-(q+1)\delta_j+r$ with $r$ a positive integer such that $r<\delta_j$. From Remark \ref{withone}, Proposition \ref{Bezoutwithone} and Remark \ref{lessthanj}, we have that $\bs x_{i'}=(1,-(q+1))$ is an irreducible B\'ezout couple for some $i'<i$. 

If $x_{ik}>1$, consider the B\'ezout couple $\bs x_r=(x_{rk},x_{rj})$ associated to $r$ with $0<x_{rk}\le \delta_j$ (and $-\delta_k<x_{rj}\le 0$; Lemma \ref{negativo-l1}). If $x_{rk}=\delta_j$, then $r=\delta_j\delta_k +x_{rj}\delta_j=\delta_j(\delta_k+x_{rj}) \ge \delta_j$, contradicting that $r<\delta_j$. Hence $x_{rk}<\delta_j$ and $\bs x_i=(1,-(q+1))+\bs x_r$, obtaining that $\bs x_i$ is not irreducible. 

If $x_{ik}<0$, write $i=\delta_j+r$. We consider now the B\'ezout couple $\bs x_r$ associated to $r$ with $-\delta_j<x_{rk}\le 0$ and $0<x_{rj}\le \delta_k$. Then $\bs x_i=(0,1)+(x_{rk},x_{rj})$, if $x_{rj}+1\le \delta_k$, or equivalently, $x_{rj}\neq \delta_k$. If $x_{rj}=\delta_k$, then $r=x_{rk}\delta_k+\delta_k\delta_j= \delta_k(x_{rk}+\delta_j)\ge \delta_k$, a contradiction. 

If $x_{ik}=0$ then $i=t\delta_j$. Then, if $t=1$ we have $i=\delta_j\in \mathrm{Im} (f)$ and for $t>1$ we deduce $(0,t)=(0,t-1)+(0,1)$ obtaining again that $(x_{ik},x_{ij})$ is a reducible Bezout couple. 
\end{proof}

\begin{proposition}
Irreducible B\'ezout couples of $\mathcal B(\delta_k,\delta_j)\cap \mathrm{Im}(f)$ are only those that come from irreducible B\'ezout couples  in $\mathcal B(\delta_j,\delta_k\bmod \delta_j)$.
\end{proposition}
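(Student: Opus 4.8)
The plan is to upgrade the statement to a genuine correspondence: a couple of $\mathcal B(\delta_k,\delta_j)\cap\mathrm{Im}(f)$ is irreducible in $\mathcal B(\delta_k,\delta_j)$ if and only if its $f$-preimage in $\mathcal B(\delta_j,\delta_k\bmod\delta_j)$ is irreducible there (the ``only those that come from'' of the statement is the ``only if'' direction, but both fall out of the same computation). First I would observe that the assignments of Lemmas \ref{modulo} and \ref{map} are given by the $\mathbb Z$-linear maps $(a,b)\mapsto(\peb[\delta_k/\delta_j]a+b,a)$ and $(a,b)\mapsto(b,a-\peb[\delta_k/\delta_j]b)$ of $\mathbb Z^2$, that these are mutually inverse automorphisms of $\mathbb Z^2$, and that the second one, call it $F$, extends $f$. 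Moreover $F$ carries the index form $x_{ij}\delta_j+x_{ik}(\delta_k\bmod\delta_j)$ on the source to the index form $x_{ik}\delta_k+x_{ij}\delta_j$ on the target (this is the identity displayed in the proof of Lemma \ref{map}), and, combining Lemmas \ref{modulo} and \ref{map} with the remark after the previous proposition, $F$ restricts to a bijection from $\mathcal B(\delta_j,\delta_k\bmod\delta_j)$ onto exactly those couples of $\mathcal B(\delta_k,\delta_j)$ whose index is at most $\delta_j$, which is precisely $\mathcal B(\delta_k,\delta_j)\cap\mathrm{Im}(f)$.

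With this in hand the argument is short. If $\bs x_i\in\mathcal B(\delta_k,\delta_j)\cap\mathrm{Im}(f)$, so its index $i$ satisfies $i\le\delta_j$, is reducible, I would write $\bs x_i=\bs x_a+\bs x_b$ with $\bs x_a,\bs x_b$ B\'ezout couples of $\mathcal B(\delta_k,\delta_j)$ of positive indices; since the index is a linear form in the couple, these indices $a,b$ satisfy $a+b=i\le\delta_j$, so $1\le a,b<\delta_j$, and therefore $\bs x_a,\bs x_b$ both lie in $\mathrm{Im}(f)$, say $\bs x_a=F(\bs x'_a)$ and $\bs x_b=F(\bs x'_b)$ with $\bs x'_a,\bs x'_b$ B\'ezout couples of indices $a,b$. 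Applying $F^{-1}$ to $\bs x_i=\bs x_a+\bs x_b$ gives $\bs x'_i=\bs x'_a+\bs x'_b$; as $a,b<\delta_j=\max\{\delta_j,\delta_k\bmod\delta_j\}$ these indices are admissible, so $\bs x'_i$ is reducible. Conversely, if $\bs x'_i=\bs x'_a+\bs x'_b$ with positive indices $a,b$, then $a+b=i\le\delta_j$, so $a,b<\delta_j<\delta_k=\max\{\delta_k,\delta_j\}$, and applying $F$ yields $\bs x_i=\bs x_a+\bs x_b$ with $\bs x_a,\bs x_b$ B\'ezout couples of admissible indices, so $\bs x_i$ is reducible. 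This is the desired equivalence.

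The main obstacle I foresee is purely the bookkeeping of index ranges: ``irreducible'' is defined only with respect to decompositions whose summand indices lie in $\{1,\dots,\max\{\cdot,\cdot\}\}$, so I must check that on the source side the indices stay $\le\delta_j$ and on the target side $\le\delta_k$ — which is exactly where the hypothesis $i\le\delta_j$, equivalently $\bs x_i\in\mathrm{Im}(f)$, is consumed — and I must make sure a couple of index $<\delta_j$ on the target side really does lie in $\mathrm{Im}(f)$, which is where the mutual-inverse identity between Lemmas \ref{modulo} and \ref{map} is used. A second, minor point is that the decomposing couples $\bs x_a,\bs x_b$ may mix $\lambda$- and $\mu$-type; but because everything here is expressed through $\mathbb Z^2$ and the linear index form, so that Lemma \ref{romper}'s conclusion $i=a+b$ holds regardless of type, this causes no trouble.
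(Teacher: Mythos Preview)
Your proof is correct and follows essentially the same approach as the paper: both directions rest on the additivity and injectivity of $f$ (your $F$) together with the observation that $\mathrm{Im}(f)$ consists precisely of the B\'ezout couples of index at most $\delta_j$, so that any decomposition $\bs x_i=\bs x_a+\bs x_b$ with $a+b=i\le\delta_j$ forces both summands to lie in $\mathrm{Im}(f)$. Your write-up is more explicit than the paper's (in particular you spell out the converse direction and the index bookkeeping that the paper leaves implicit), but the underlying argument is the same.
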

\begin{proof}
Irreducible elements in $\mathrm{Im}(f)$ are those elements in $\mathrm{Im}(f)$ that can not be written as sum of two elements in $\mathcal B(\delta_k,\delta_j)$. So additivity of $f$ ensures that the pre-images of these irreducible elements can not be expressed as sum of elements in $\mathcal B(\delta_j,\delta_k\bmod \delta_j)$.

If we have $\bs x'_i$ an irreducible element in $\mathcal B(\delta_j,\delta_k\bmod \delta_j)$, we know that $i<\delta_j$ so we can not to write $\bs x_i$ as sum of elements out of $\mathrm{Im}(f)$, because elements out of $\mathrm{Im}(f)$ correspond with numbers bigger than $\delta_j$. \qedhere
\end{proof}

Observe that according to the last two results and Theorem \ref{main} the elements in $\Delta(S)$ can be obtained in the following way.
\begin{itemize}
\item First compute the couples $(1,-t)$ with $t\in \{0,\ldots, \peb [\delta_3/\delta_1]\}$. These correspond to the values $\delta_3-t\delta_1$ in $\Delta(S)$. 

\item Then proceed in the same way with $(\delta_3,\delta_3\bmod \delta_1)$, until we reach $\gcd(\delta_1,\delta_3)$. 
\end{itemize}
Observe that if $\delta_1<\delta_3$, then $\peb[\delta_1/\delta_3]=0$, and we go directly to the second step, swapping $\delta_1$ with $\delta_3$.

Thus the possible values in $\Delta(S)$ are those arising in the calculation of $\gcd(\delta_1,\delta_3)$ in the naive way. That is $\gcd(a,b)=\gcd(a,b-a)$ while $b>a$, and then we swap positions and start anew, until we reach 0 in the second argument. The output is just the set of $a$'s and $b$'s appearing in the process removing 0.
 
\begin{example}
Let $S=\langle 1407,26962,35413\rangle$. Its minimal presentation is: 
\[
\{((411,0,0),(0,7,11));((0,91,0),(284,0,58));((0,0,69),(127,84,0))\},
\]
so $\delta_1=393$, $\delta_2=251$, $\delta_3=142$. We start the Euclid's algorithm with $\delta_k=393$ and $\delta_j=142$ to obtain:

\[
\begin{array}{cccccl}
\delta_k=393 & \delta_j=142 & & &\\
 & _{(1,0)} & _{(1,-1)} & {\color{red}_{(1,-2)}} & & \\
 & 393 & 251 & {\color{red}109} & {\color{red}=\delta_k\bmod \delta_j} &\\
 \delta_k=142 & \delta_j=109 & & & &\\ 
 & _{(0,1)} & {\color{red}_{(-1,3)}}  &  & & \\
 & 142 & {\color{red}33} & {\color{red}=\delta_k\bmod \delta_j}   & & \\
\delta_k=109 & \delta_j=33 & & & & \\
 & _{(1,-2)} & _{(2,-5)} & _{(3,-8)} & {\color{red}_{(4,-11)}} &   \\
 & 109 & 76 & 43 & {\color{red}10} &  {\color{red}=\delta_k\bmod \delta_j}\\
\delta_k=33 & \delta_j=10 & & & & \\
 &_{(-1,3)} & _{(-5,14)} & _{(-9,25)} & {\color{red}_{(-13,36)}} &  \\
 & 33 & 23 & 13 & {\color{red}3}  & {\color{red}=\delta_k\bmod \delta_j} \\
\delta_k=10 & \delta_j=3 & & & &  \\
 & _{(4,-11)} & _{(17,-47)} & _{(30,-83)} & {\color{red}_{(43,-119)}} &   \\
 & 10 & 7 & 4 & {\color{red}1} &  {\color{red}=\delta_k\bmod \delta_j}\\ 
\delta_k=3 & \delta_j=1 & & & &  \\
 & _{(-13,36)}  & _{(-56,155)} & _{(-99,274)} & {\color{red}_{(-142,393)}} &  \\
 & 3 & 2 & 1 & {\color{red}0}  &{\color{red}=\delta_k\bmod \delta_j}
\end{array}
\]
The coordinates shown over the integers are coordinates with respect to $\delta_k=393$ and $\delta_j=142$.
 So, we have that the $\mu$-B\'ezout couples are:
\[
\{(1,0),(1,-1),(1,-2),(2,-5),(3,-8),(4,-11),(17,-47),(30,-83),(43,-119)\}
\]
and the $\lambda$-B\'ezout couples are:
\[
\{(0,1),(-1,3),(-5,14),(-9,25),(-13,36),(-56,155),(-99,274)\}.
\]

Observe that the couples $(43,-119)$ and $(-99,274)$ correspond to $i=1$, and both are irreducible B\'ezout couples, but we only need to compute them once, so we can ``forget'' $(43,-119)$ when we are looking for the Delta set.

Thus the Delta set for $S$ is:
\[
\Delta(S)=\{1,2,3,4,7,10,13,23,33,43,76,109,142,251,393\}.
\]

In practice, when we are only interested in the Delta set, we do not need to keep track of the B\'ezout couples, just the integers appearing in the greatest common divisor computation.

\begin{verbatim}
gap> deltasetnsembdim3(1407, 26962, 35413);time;
[ 1, 2, 3, 4, 7, 10, 13, 23, 33, 43, 76, 109, 142, 251, 393 ]
1
\end{verbatim}
The time is in milliseconds, that is, it takes 1 millisecond to compute $\Delta(S)$. The current procedure \texttt{DeltaSetOfNumericalSemigroup} in \texttt{nume\-rical\-sgps} executed with this example was stopped after one day without an output. The  implementation of \texttt{DeltaSetOfNumericalSemigroup} is based on a dynamical procedure presented in \cite{MOP} and was kindly programmed by Chris O'Neil (see the contributions section in the manual of \texttt{numericalsgps}). Of course it was meant for arbitrary numerical semigroups, and not just nonsymmetric numerical semigroups with embedding dimension three. The idea in \cite{MOP} is to compute all Delta sets of elements up to when this calculation becomes periodical. In our example the bound for periodicity is just too big; this is why it was not able to give an answer after one day of computation. 
\begin{verbatim}
gap> DeltaSetPeriodicityBoundForNumericalSemigroup(
> NumericalSemigroup(1407, 26962, 35413));
916982754
\end{verbatim}

Next we show another examples of execution with their timings (milliseconds).
\begin{verbatim}
gap> s:=NumericalSemigroup(101,301,510);; 
gap> DeltaSetOfNumericalSemigroup(s);time;
[ 1, 2, 3, 4, 5, 6, 7, 8, 9, 10, 11, 12, 13, 14, 15, 16, 17, 18 ]
10271
gap> deltasetnsembdim3(101,301,510);time;
[ 1, 2, 3, 4, 5, 6, 7, 8, 9, 10, 11, 12, 13, 14, 15, 16, 17, 18 ]
1
\end{verbatim}

\begin{verbatim}
gap> s:=NumericalSemigroup(151,301,510);;
gap> DeltaSetOfNumericalSemigroup(s);time;
[ 1, 2, 3, 5, 7, 12, 17, 22 ]
4976
gap> deltasetnsembdim3(151,301,510);time;
[ 1, 2, 3, 5, 7, 12, 17, 22 ]
1
\end{verbatim}

\begin{verbatim}
gap> deltasetnsembdim3(8,41,79);time;      
[ 1, 2, 3, 4, 5, 6, 7, 13 ]
1
\end{verbatim}
\end{example}

In \cite[Section 2]{jmda} we present a procedure to compute the primitive elements of $\ker\varphi$, or equivalently, a Graver basis of $M$, that is, the set of minimal nonzero elements of $M$ with respect to $\sqsubseteq$, defined as $(x_1,x_2,x_3) \sqsubseteq (y_1,y_2,y_3)$ if $x_iy_i\ge 0$ and $|x_i|\le |y_i|$ for $i\in\{1,2,3\}$. It is easy to prove that $\mathbf v_1$, $\mathbf v_2$ and $\mathbf v_3$ are elements in this Graver basis. So for instance, in order to compute $\mathbf v_1$, we look for the elements in the Graver basis of $M$ of the form $(a,b,c)$ with $a\neq 0$ and $b c\ge 0$, and choose the element with least $|a|$ (which corresponds with $c_1$).  In this way we can compute a minimal presentation for $S$, and consequently $\delta_1$ and $\delta_3$. The algorithm presented in \cite{jmda} has the same complexity as Euclid's greatest common divisor algorithm. The timings presented in the above examples for \texttt{deltasetembdim3} include the calculation of a minimal presentation.

Checking whether or not $S$ is nonsymmetric can be done easily by using \cite[Theorem 10.6]{RG}, which relies also in greatest common divisor calculations. The semigroup $S$ is symmetric if and only if it is of the form $\langle am_1,am_2, bm_1+cm_2\rangle$ with 
\begin{itemize}
\item $m_1$ and $m_2$ coprime integers greater than one,
\item $a$, $b$ and $c$ nonnegative integers with $a\ge 2$, $b+c\ge 2$ and $\gcd(a,bm_1+cm_2)=1$.
\end{itemize}
So if we want to check whether or not  $S=\langle n_1,n_2,n_3\rangle$ is symmetric, we take all possible partitions $\{\{n_i,n_j\},\{n_k\}\}$ with $\{i,j,k\}=\{1,2,3\}$. Then for each partition we compute $a=\gcd\{n_i,n_j\}$, and if it is greater than one, we check if $n_k\in \langle n_i/a,n_j/a\rangle \setminus\{n_i/a,n_j/a\}$. If so, the semigroup is symmetric. If it is not the case for any partition, then $S$ is not symmetric.

\begin{example}
Let $S=\langle 4,6,9\rangle$. Then $\gcd(4,6)=2$ and $9\in \langle 2,3\rangle\setminus \{2,3\}$. Whence $S$ is symmetric.

For $S=\langle 3,5,7\rangle$, every two generators are coprime  (and so $a=1$), whence  $S$ is not symmetric. If we compute the Graver basis for $M\equiv 3x+5y+7z=0$ using \cite{jmda}, we obtain
\[
G=\{ ( 0, -7, 5 ), ( 1, -2, 1 ), ( 1, 5, -4 ), ( 2, 3, -3 ), ( 3, 1, -2 ), 
  ( 4, -1, -1 ), ( 5, -3, 0 ), ( 7, 0, -3 ) \}
\]
(we remove $-\mathbf v$ if $\mathbf v$ is already in the basis). When looking for $\mathbf v_1$ we need to search for the elements $(a,b,c)\in G$ with $a\neq 0$ and $bc\ge 0$: $\{(4,-1,-1), (5,-3,0), (7,0,-3)\}$. Then choose the element with minimal $|a|$. In this case, $\mathbf v_1=(4,-1,-1)$, which yields $((4,0,0),(0,1,1))\in \ker \phi$. We proceed in the same way with $\mathbf v_2$ and $\mathbf v_3$.
It follows that a minimal presentation is 
\[
\{ ((4,0,0),(0,1,1)), ((0,2,0),(1,0,1)), ((0,0,2),(3,1,0))\}.
\] 
Hence $\delta_1=2=\delta_3$ and $\delta_2=0$. In this case $\Delta(S)=\{2\}$.
\end{example}

\end{document}